\newlength{\defbaselineskip}
\newcommand{\setlinespacing}[1]%
           {\setlength{\baselineskip}{#1 \defbaselineskip}}
\theoremstyle{plain}
\newtheorem{thm}{Theorem}[section]
\newtheorem{cor}[thm]{Corollary}
\newtheorem{lem}[thm]{Lemma}
\newtheorem{prop}[thm]{Proposition}
\theoremstyle{definition}
\newtheorem{defn}{Definition}[section]
\newtheorem{rmk}{Remark}[section]
\newcommand{\eps}{\varepsilon}
\DeclareMathOperator*{\esssup}{esssup}
\DeclareMathOperator*{\essinf}{essinf}
\newcommand{\cO}{\mathcal{O}}
\newcommand{\cL}{\mathcal{L}}
\newcommand{\cT}{\mathcal{T}}
\newcommand{\cB}{\mathcal{B}}
\newcommand{\cA}{\mathcal{A}}
\newcommand{\cS}{\mathcal{S}}
\newcommand{\cG}{\mathcal{G}}
\newcommand{\cU}{\mathcal{U}}
\newcommand{\bH}{\mathbb{H}}
\newcommand{\bP}{\mathbb{P}}
\newcommand{\bR}{\mathbb{R}}
\newcommand{\bN}{\mathbb{N}}
\newcommand{\sF}{\mathscr{F}}
\newcommand{\sP}{\mathscr{P}}
\makeatletter\@addtoreset{equation}{section} \makeatother
\begin{document}

\title{Viscosity Solutions of Stochastic Hamilton-Jacobi-Bellman Equations\footnotemark[1] 
}

\author{Jinniao Qiu\footnotemark[2]  }
\footnotetext[1]{This work was partially supported by the National Science and Engineering Research Council of Canada. }
\footnotetext[2]{Department of Mathematics \& Statistics, University of Calgary, 2500 University Drive NW, Calgary, AB T2N 1N4, Canada. \textit{E-mail}: \texttt{jinniao.qiu@ucalgary.ca}. }
%
%

\maketitle

\begin{abstract}
In this paper we study the fully nonlinear stochastic Hamilton-Jacobi-Bellman (HJB) equation for the optimal stochastic control problem of stochastic differential equations with random coefficients. The notion of viscosity solution is introduced, and we prove that the value function of the optimal stochastic control problem is the maximal viscosity solution of the associated stochastic HJB equation. For the superparabolic cases when the diffusion coefficients are deterministic functions of time, states and controls, the uniqueness is addressed as well.

\end{abstract}

{\bf Mathematics Subject Classification (2010):}  49L20, 49L25, 93E20, 35D40, 60H15

{\bf Keywords:} stochastic Hamilton-Jacobi-Bellman equation, optimal stochastic control, backward stochastic partial differential equation, viscosity solution

\section{Introduction}
Let $(\Omega,\sF,\{\sF_t\}_{t\geq0},\bP)$ be a complete filtered probability space carrying an $m$-dimensional Wiener process $W=\{W_t:t\in[0,\infty)\}$ such that $\{\sF_t\}_{t\geq0}$ is the natural filtration generated by $W$ and augmented by all the
$\bP$-null sets in $\sF$. We denote by $\sP$ the $\sigma$-algebra of the predictable sets on $\Omega\times[0,T]$ associated with $\{\sF_t\}_{t\geq0}$, and for each $t\geq 0$, $E_{\sF_t}[\,\cdot\,]$ represents the conditional expectation with respect to $\sF_t$.

Consider the following optimal stochastic control problem
\begin{align}
\inf_{\theta\in\cU}E\left[\int_0^T\!\! f(s,X_s,\theta_s)\,ds +G(X_T) \right] \label{Control-probm}
\end{align}
subject to
\begin{equation}\label{state-proces-contrl}
\left\{
\begin{split}
&dX_t=\beta(t,X_t,\theta_t)dt   +\sigma(t,X_t,\theta_t)\,dW_t,\,\,
\,t\in[0,T]; \\
& X_0=x,
\end{split}
\right.
\end{equation}
 where $T\in (0,\infty)$ is a fixed deterministic terminal time. Let $U\subset \bR^n$ be a nonempty compact set and $\cU$ the set of all the $U$-valued and $\sF_t$-adapted processes.   The process $(X_t)_{t\in[0,T]}$ is the {\sl state process}. It is governed by the {\sl control} $\theta\in\cU$. We sometimes write $X^{r,x;\theta}_t$ for $0\leq r\leq t\leq T$ to indicate the dependence of the state process on the control $\theta$, the initial time $r$ and initial state $x\in \mathbb{R}^d$.
 
In this paper, we consider the non-Markovian case where the coefficients $\beta,\sigma,f$ and $G$ depend not only on time, space and control but also \textit{explicitly} on $\omega \in\Omega$ (see assumption $(\cA1)$).    The dynamic cost functional is defined by
\begin{align}
J(t,x;\theta)=E_{\sF_t}\left[\int_t^T\!\! f(s,X^{t,x;\theta}_s,\theta_s)\,ds +G(X^{t,x;\theta}_T) \right],\ \ t\in[0,T] \label{eq-cost-funct}
\end{align}
and the value function is given by
\begin{align}
V(t,x)=\essinf_{\theta\in\cU}J(t,x;\theta),\quad t\in[0,T].
\label{eq-value-func}
\end{align}
In the spirit of dynamic programming principle, Peng conjectured (see \cite{Peng_92}) that the value function $V$ satisfies the stochastic HJB equation of the following form:
\begin{equation}\label{SHJB}
  \left\{\begin{array}{l}
  \begin{split}
  -du(t,x)=\,& 
 \mathbb{H}(t,x,Du,D^2u,D\psi) 
 \,dt -\psi(t,x)\, dW_{t}, \quad
                     (t,x)\in Q:=[0,T)\times\bR^d;\\
    u(T,x)=\, &G(x), \quad x\in\bR^d,
    \end{split}
  \end{array}\right.
\end{equation}
with 
\begin{align*}
\mathbb{H}(t,x,p,A,B)
= \essinf_{v\in U} \bigg\{
\text{tr}\left(\frac{1}{2}\sigma \sigma'(t,x,v) A+\sigma(t,x,v) B\right)
       +\beta'(t,x,v)p +f(t,x,v)
                \bigg\} 
\end{align*}
for $(p,A,B)\in \bR^d\times\bR^{d\times d}\times \bR^{m\times d}$,  where both the random fields $u(t,x)$ and $\psi(t,x)$ are  unknown. Along this line, a specific fully nonlinear stochastic HJB equation was formulated by Englezos and Karatzas \cite{EnglezosKaratzas09} for the utility maximization with habit formation, and more applications are  referred to \cite{bender2016-first-order,cardaliaguet2015master,Hu_Ma_Yong02} among many others.

The stochastic HJB equations are a class of backward stochastic partial differential equations (BSPDEs). The study of linear BSPDEs  dates back to  about forty years ago (see\cite{Pardoux1979}). They arise in many applications of probability theory and stochastic processes, for
instance in the nonlinear filtering and stochastic control theory for processes with incomplete information, as an adjoint equation of the Duncan-Mortensen-Zakai filtering equation (see \cite{Hu_Ma_Yong02,Tang_98,Zhou_93}). The representation relationship between forward-backward stochastic differential equations and BSPDEs yields the stochastic Feynman-Kac formula (see \cite{Hu_Ma_Yong02}). In addition, as the obstacle problems of BSPDEs, the reflected BSPDE arises as the HJB equation for the optimal stopping problems (see \cite{QiuWei-RBSPDE-2013}).

The linear and semilinear BSPDEs have been extensively studied, we refer to \cite{DuQiuTang10,DuTangZhang-2013,Hu_Ma_Yong02,ma2012non,Tang-Wei-2013} among many others. For the weak solutions and associated local behavior analysis for general quasi-linear BSPDEs, see \cite{QiuTangMPBSPDE11}, and we refer to \cite{Horst-Qiu-Zhang-14} for BSPDEs with singular terminal conditions. In the recent work \cite{Qiu2014weak}, the author studied the weak solution in Sobolev spaces for a special class of the fully nonlinear stochastic HJB equations (with $\beta\equiv 0$ and $\sigma(t,x,v)\equiv v$). The existence and uniqueness of solution for general cases is claimed as  an open problem in Peng's plenary lecture of ICM 2010 (see \cite{peng2011backward}).

In this paper, we propose a notion of viscosity solution for fully nonlinear stochastic HJB equations. The value function $V$ is verified to be the maximal viscosity solution, and for the superparabolic cases when the diffusion coefficients $\sigma$ do not depend explicitly on $\omega\in\Omega$ (see (ii) of $(\cA 3)$), the uniqueness is proved as well.  

Heuristically, the concerned random fields like the first unknown variable $u$ and the value function $V$ may be confined to the stochastic differential equations (SDEs) of the form:
\begin{align}
u(t,x)=u(T,x)-\int_{t}^T\mathfrak{d}_{s} u(s,x)\,ds-\int_t^T\mathfrak{d}_{w}u(s,x)\,dW_s,\quad (t,x)\in[0,T]\times\bR^d. \label{SDE-u}
\end{align}
The Doob-Meyer decomposition theorem implies the uniqueness of the pair $(\mathfrak{d}_tu,\,\mathfrak{d}_{\omega}u)$ and thus makes sense of the linear operators $\mathfrak{d}_t$ and $\mathfrak{d}_{\omega}$ which actually coincide with the two differential operators introduced by Le$\tilde{\text{a}}$o, Ohashi and Simas in \cite[Theorem 4.3]{Leao-etal-2018}. By contrast, we have $\psi=\mathfrak{d}_{\omega}u$ and to solve \eqref{SHJB} with a pair $(u,\psi)$ is equivalent to find $u$ (of form \eqref{SDE-u}) satisfying 
\begin{equation}\label{SHJB-eqiv}
  \left\{\begin{array}{l}
  \begin{split}
  -\mathfrak{d}_tu(t,x)
- \mathbb{H}(t,x,Du(t,x),D^2u(t,x),D\mathfrak{d}_{\omega}u(t,x))&=0,  \quad
                     (t,x)\in Q;\\
    u(T,x)&= G(x), \quad x\in\bR^d.
    \end{split}
  \end{array}\right.
\end{equation}
The equivalence relation between \eqref{SHJB} and \eqref{SHJB-eqiv} provides the key to defining the viscosity solutions for stochastic HJB equations. 
The main challenge lies in the  nonanticipativity constraints on the unknown variables and the fact that all the involved coefficients herein are only measurable w.r.t. $\omega$ on the sample space $(\Omega,\sF)$. This challenge prevents us from defining the viscosity solutions in a point-wise manner. To overcome this difficulty, we use a class of random fields of form \eqref{SDE-u} having sufficient spacial regularity as test functions and at each point $(\tau,\xi)$ ($\tau$ may be stopping time and $\xi$ may be an $\bR^d$-valued $\sF_{\tau}$-measurable variable) the classes of test functions are also parameterized by $\Omega_{\tau}\in\sF_{\tau}$ (see $\underline{\mathcal{G}}u(\tau,\xi;\Omega_{\tau})$ and $\overline{\mathcal{G}}u(\tau,\xi;\Omega_{\tau})$ in Section \ref{sec:def}).

We refer to \cite{crandall2000lp,crandall1992user,juutinen2001definition,wang1992-II} among many others for the theory of (deterministic) viscosity solutions and \cite{buckdahn2015pathwise,buckdahn2007pathwise,LionsSouganidis1998b} for the stochastic viscosity solutions of (forward) SPDEs. Note that the (backward) stochastic HJB equations like  \eqref{SHJB} and the (forward) ones studied in  \cite{buckdahn2007pathwise,LionsSouganidis1998b} 
are essentially different, i.e., the noise term in the latter is exogenous, while in the former it comes from the martingale representation and is governed by the coefficients, and thus it is endogenous. 

When the coefficients $\beta,\sigma,f$ and $G$ are deterministic functions of time $t$, control $\theta$ and the paths of $X$ and $W$, the optimal stochastic control problem is beyond the classical Markovian framework and the value function can be characterized by a path-dependent PDE. We refer to \cite{ekren2014viscosity,ekren2016viscosity-1,peng2011note,tang2012path} for the theory of viscosity solutions of  such nonlinear path-dependent PDEs. In particular, in both \cite{ekren2014viscosity,ekren2016viscosity-1}, the authors applied the path-dependent viscosity solution theory to some classes of stochastic HJB equations like \eqref{SHJB} which, however, required all the coefficients to be continuous in $\omega\in  \Omega$ due to the involved pathwise analysis. We would stress that, in the present work, all the involved coefficients are only measurable w.r.t. $\omega\in \Omega$ and we even do not need to specify any topology on $\Omega$, which allows the general random variables to appear in the coefficients.

The rest of this paper is organized as follows. In Section 2, we introduce some notations
and give the standing assumptions on the coefficients. Section 3 is devoted to some regular properties of the value function and the dynamic programming principle. In Section 4, we define the viscosity solution and prove the existence. In Section 5 we verify that the value function is the maximal viscosity solution  and then the uniqueness of viscosity solution is addressed for the superparabolic cases. Finally, in Appendix A  we recall a measurable selection theorem and comment on how it is used in this work, and Appendix B gives the proof of Proposition \ref{prop-value-func}.


\section{Preliminaries}

Denote by $|\cdot|$ the norm in  Euclidean spaces.
 Define the parabolic distance in $\bR^{1+d}$ as follows:
$$\delta(X,Y):=\max\{ |t-s|^{1/2},|x-y| \},$$
 for $X:=(t,x)$ and $Y:=(s,y)\in \bR^{1+d}$. Denote by $Q^+_r(X)$ the hemisphere of radius $r>0$ and center $X:=(t,x)\in \bR^{1+d}$ with $x\in \bR^d$:
\begin{equation*}
  \begin{split}
    Q^+_r(X):=\,  [t,t+r^2)\times B_r(x), \quad
    B_r(x):=\, \{ y\in\bR^n:|y-x|<r \},
  \end{split}
\end{equation*}
and by $|Q^+_r(X)|$ the volume. Throughout this paper, we write $(s,y)\rightarrow (t^+,x)$, meaning that $s \downarrow t$ and $y\rightarrow x$.

Let $\mathbb B $ be a Banach space equipped with norm $\|\cdot\|_{\mathbb B }$. For each $t\in[0,T]$, denote by $L^0(\Omega,\sF_t;\mathbb B)$ the space of $\mathbb B$-valued $\sF_t$-measurable random variables. For $p\in[1,\infty]$, $\cS ^p ({\mathbb B })$ is the set of all the ${\mathbb B }$-valued,
 $\sP$-measurable continuous processes $\{\mathcal X_{t}\}_{t\in [0,T]}$ such
 that
{\small $$\|\mathcal X\|_{\cS ^p({\mathbb B })}:= \left\|\sup_{t\in [0,T]} \|\mathcal X_t\|_{\mathbb B }\right\|_{L^p(\Omega,\sF,\bP)}< \infty.$$
}
 Denote by $\mathcal{L}^p({\mathbb B })$ the totality of all  the ${\mathbb B }$-valued,
  $\sP$-measurable processes $\{\mathcal X_{t}\}_{t\in [0,T]}$ such
 that
 {\small
 $$
 \|\mathcal X\|_{\mathcal{L}^p({\mathbb B })}:=\left\| \bigg(\int_0^T \|\mathcal X_t\|_{\mathbb B }^p\,dt\bigg)^{1/p} \right\|_{L^p(\Omega,\sF,\bP)}< \infty.
 $$
 }
Obviously, $(\cS^p({\mathbb B }),\,\|\cdot\|_{\cS^p({\mathbb B })})$ and $(\mathcal{L}^p({\mathbb B }),\|\cdot\|_{\mathcal{L}^p({\mathbb B })})$
are Banach spaces.

For each $(k,q)\in \mathbb{N}_0\times [1,\infty]$ we define  the $k$-th Sobolev space $(H^{k,q},\|\cdot\|_{k,q})$ as usual, and for each domain $\cO\subset \bR^d$, denote by $C^k(\cO)$ the space of functions with the up to $k$-th order derivatives being bounded and continuous on $\cO$, $C^k_0(\cO) $ being the subspace of $C^k(\cO)$ vanishing on the boundary $\partial \cO$. When $k=0$, write $C_0(\cO) $ and $C(\cO)$ simply. Through this paper, we define 
$$
\cS^p(C_{loc}(\bR^d))=\cap_{N>0} \cS^{p}(C(B_N(0))),\quad \text{ for } p\in[1,\infty].
$$

By convention, we treat elements of spaces like $\cS^p(H^{k,q})$ and $\cL^p(H^{k,q})$ as functions rather than distributions or classes of equivalent functions, and if a function of such class admits a version with better properties, we always denote this version by itself. For example, if $u\in \cL^p(H^{k,q})$ and $u$ admits a version lying in $\mathcal{S}^p(H^{k,q})$, we always adopt the modification $u\in \cL^p(H^{k,q})\cap \cS^p(H^{k,q})$.
\medskip

Throughout this work, we use the following assumption.

\bigskip
   $({\mathcal A} 1)$ \it $G\in L^{\infty}(\Omega,\sF_T;H^{1,\infty})$. For the coefficients $g=f,\beta^i,\sigma^{ij}$ $(1\leq i \leq d,\,1\leq j\leq m)$, \\
(i) $g:~\Omega\times[0,T]\times\bR^d\times U\rightarrow\bR$  {is}
$\sP\otimes\cB(\bR^d)\otimes\cB(U)\text{-measurable}$;\\
(ii) for almost all $(\omega,t)\in\Omega\times [0,T]$, $g(t,x,v)$ is uniformly continuous on $\bR^d\times U$;\\
(iii) there exists $L>0$ such that 
\begin{align*}
\|G\|_{L^{\infty}(\Omega,\sF_T;H^{1,\infty})} + \sup_{v\in\cU} \|g(\cdot,\cdot,v)\|_{\cL^{\infty}(H^{1,\infty})} \leq L.
\end{align*}



\section{Some properties of the value function and dynamic programming principle}

We first recall some standard properties of the strong solutions for SDEs (see \cite[Theorems 6.3 \& 6.16 of Chapter 1]{yong-zhou1999stochastic}). 
\begin{lem}\label{lem-SDE}
Let $(\cA1)$ hold. Given $\theta\in\cU$, for the strong solution of SDE \eqref{state-proces-contrl}, there exists $K>0$  such that, for any $0\leq r \leq t\leq s \leq T$ and $\xi\in L^p(\Omega,\sF_r;\bR^d)$ with $p\in[1,\infty)$,\\[3pt]
(i)   the two processes $\left(X_s^{r,\xi;\theta}\right)_{t\leq s \leq T}$ and $\left(X^{t,X_t^{r,\xi;\theta};\theta}_s\right)_{t\leq s\leq T}$ are indistinguishable;\\[2pt]
(ii)  $E_{\sF_r}\max_{r\leq l \leq T} \left|X^{r,\xi;\theta}_l\right|^p \leq K \left(1+ |\xi|^p\right)$ a.s.;\\[2pt]
(iii) $E_{\sF_r}\left| X^{r,\xi;\theta}_s-X^{r,\xi;\theta}_t  \right|^p \leq K \left(1+  |\xi|^p\right) (s-t)^{p/2}$ a.s.;\\[2pt]
(iv) given another $\hat{\xi}\in L^p(\Omega,\sF_r;\bR^d)$, 
$$
E_{\sF_r}\max_{r\leq l \leq T} \left|X^{r,\xi;\theta}_l-X^{r,\hat\xi;\theta}_l\right|^p \leq K  |\xi-\hat\xi|^p
\quad \text{a.s.};
$$
(v) the constant $K$ depends only on $L$, $T$ and $p$.
\end{lem}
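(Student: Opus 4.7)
The plan is to treat the four items as a standard package of $L^p$-estimates for SDEs with random coefficients, adapted to the conditional-expectation setting. The key structural input is $(\cA1)$(iii): the $\cL^\infty(H^{1,\infty})$-bound on $g=\beta^i,\sigma^{ij}$ gives, uniformly in $(\omega,t,v)$, both a Lipschitz bound $|g(t,x,v)-g(t,y,v)|\le L|x-y|$ and a linear growth bound $|g(t,x,v)|\le L(1+|x|)$. Together with $\sP\otimes\cB(\bR^d)\otimes\cB(U)$-measurability and the adaptedness of $\theta\in\cU$, this yields existence and pathwise uniqueness of an $\sF_s$-adapted continuous strong solution to \eqref{state-proces-contrl} starting from any $L^p$-initial datum; this is the classical input I would simply invoke.

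For (i), both $s\mapsto X^{r,\xi;\theta}_s\big|_{[t,T]}$ and $s\mapsto X^{t,X^{r,\xi;\theta}_t;\theta}_s$ are continuous $\sF_s$-adapted strong solutions on $[t,T]$ of the same SDE driven by the same $\theta$ with the common $\sF_t$-measurable initial datum $X^{r,\xi;\theta}_t$, so pathwise uniqueness forces indistinguishability. For (ii), I would apply It\^o's formula to $(1+|X^{r,\xi;\theta}_s|^2)^{p/2}$, control the drift and quadratic variation by the linear growth of $\beta$ and $\sigma$, handle the martingale piece by the conditional Burkholder--Davis--Gundy inequality, and close the estimate by a conditional Gronwall argument applied to $s\mapsto E_{\sF_r}\max_{r\le l\le s}|X^{r,\xi;\theta}_l|^p$. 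Item (iii) then follows by writing
\[
X^{r,\xi;\theta}_s-X^{r,\xi;\theta}_t=\int_t^s\beta(l,X^{r,\xi;\theta}_l,\theta_l)\,dl+\int_t^s\sigma(l,X^{r,\xi;\theta}_l,\theta_l)\,dW_l,
\]
conditioning on $\sF_r$, raising to the $p$-th power, applying conditional Jensen to the drift integral and conditional BDG to the stochastic integral (which generates the factor $(s-t)^{p/2}$ via Brownian scaling), and substituting the moment bound from (ii).

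For (iv), I would set $\Delta_l:=X^{r,\xi;\theta}_l-X^{r,\hat\xi;\theta}_l$ and apply It\^o to $|\Delta_l|^p$ (with the usual regularization $(|\Delta_l|^2+\eps)^{p/2}$ when $p<2$, letting $\eps\downarrow 0$ at the end). The uniform-in-$(\omega,l,v)$ Lipschitz bound on $\beta,\sigma$ dominates every increment by $L|\Delta_l|$, so after another round of conditional BDG and conditional Gronwall the estimate emerges with a constant depending only on $L,T,p$, which simultaneously gives (v) by inspection of each step. The only real technical obstacle is bookkeeping: every inequality must be kept in its \emph{conditional} form with $E_{\sF_r}[\,\cdot\,]$ in front rather than an unconditional expectation, so one must systematically invoke conditional versions of Doob's maximal, Burkholder--Davis--Gundy, and Gronwall's inequalities. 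Beyond that point, the argument is the classical template of \cite[Chapter 1]{yong-zhou1999stochastic}, which is precisely why the lemma is stated as a citation.
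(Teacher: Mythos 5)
Your proposal is correct and matches the paper's treatment: the paper gives no proof of this lemma, instead citing \cite[Theorems 6.3 \& 6.16 of Chapter 1]{yong-zhou1999stochastic}, and your sketch (pathwise uniqueness for the flow property, It\^o plus conditional Burkholder--Davis--Gundy and Gronwall for the conditional moment, increment, and stability estimates) is precisely the classical argument underlying that citation. The one point worth keeping explicit, which you already flag, is that all estimates must be carried out in conditional form with $E_{\sF_r}[\,\cdot\,]$ so that the bounds hold almost surely against the random initial datum $\xi$.
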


For the dynamic cost functional defined in \eqref{eq-cost-funct}, the following lemma is an immediate application of \cite[Theorem 4.7 and Lemma 6.5]{Peng-DPP-1997}.

\begin{lem}\label{lem-J}
Let $\theta\in\cU$. For any $0\leq t\leq T$ and any $\xi\in L^p(\Omega,\sF_t;\bR^d)$ with $p\in[1,\infty]$, we have 
$$
J(t,\xi;\theta)=E_{\sF_t}\left[\int_t^T\!\! f(s,X^{t,\xi;\theta}_s,\theta_s)\,ds +G(X^{t,\xi;\theta}_T) \right] \quad \text{ a.s.,}
$$
and 
$$
V(t,\xi)=\essinf_{v\in\cU} J(t,\xi;v),\quad \text{ a.s.}
$$
\end{lem}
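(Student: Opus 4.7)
\medskip

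\noindent\textbf{Proof proposal.} The two identities are classical substitution-of-random-initial-data results and I would follow Peng's approach (which is precisely what is cited). For the formula defining $J(t,\xi;\theta)$, I start with simple $\xi$ of the form $\xi=\sum_{i=1}^N x_i\mathbf{1}_{A_i}$ with a disjoint partition $\{A_i\}\subset\sF_t$ of $\Omega$ and $x_i\in\bR^d$. By pathwise uniqueness of the SDE \eqref{state-proces-contrl} together with the flow property from Lemma \ref{lem-SDE}(i), the process $X^{t,\xi;\theta}_s$ agrees a.s.\ with $\sum_i X^{t,x_i;\theta}_s\mathbf{1}_{A_i}$ on $[t,T]$; multiplying by $\mathbf{1}_{A_i}\in\sF_t$ and using $E_{\sF_t}[\mathbf{1}_{A_i}Y]=\mathbf{1}_{A_i}E_{\sF_t}[Y]$ immediately gives the claimed identity a.s. For general $\xi\in L^p(\Omega,\sF_t;\bR^d)$ I approximate by simple $\xi_n\to\xi$ in $L^p$; by Lemma \ref{lem-SDE}(iv) the processes $X^{t,\xi_n;\theta}$ converge to $X^{t,\xi;\theta}$ in $\cS^p(\bR^d)$, and using the $H^{1,\infty}$-Lipschitz bounds on $f$ and $G$ in assumption $(\cA 1)$(iii), I pass to the limit on both sides under the conditional expectation.

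For the essential-infimum identity, one direction is immediate: since $V(t,x)\leq J(t,x;\theta)$ a.s.\ for each deterministic $x$ and each $\theta\in\cU$, substituting $x=\xi(\omega)$ (which is legitimate because both $V(t,\cdot)$ and $J(t,\cdot;\theta)$ can, via the first part of the lemma, be evaluated at an $\sF_t$-measurable argument) yields $V(t,\xi)\leq J(t,\xi;\theta)$ a.s., hence $V(t,\xi)\leq \essinf_{\theta\in\cU}J(t,\xi;\theta)$ a.s. The substantive direction is the reverse inequality, for which I would fix $\eps>0$, partition $\bR^d$ into a countable family of disjoint Borel cells $\{C_j\}$ with centers $x_j$ and diameters less than $\delta$, pick $\theta^{\eps,j}\in\cU$ with $J(t,x_j;\theta^{\eps,j})\leq V(t,x_j)+\eps/2$ on $\{\xi\in C_j\}$, and paste: $\theta^{\eps}_s:=\sum_j\theta^{\eps,j}_s\mathbf{1}_{\{\xi\in C_j\}}$ for $s\in[t,T]$ (extended arbitrarily before $t$). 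Since $\{\xi\in C_j\}\in\sF_t$, the pasted control is predictable and $U$-valued, hence lies in $\cU$. Using Lipschitz dependence of $J(t,\cdot;\theta)$ and of $V(t,\cdot)$ on the initial state (both of which follow uniformly in $\theta$ from Lemma \ref{lem-SDE}(iv) together with $(\cA 1)$(iii)), choosing $\delta$ small controls the pasting error by $\eps/2$, giving $\essinf_\theta J(t,\xi;\theta)\leq J(t,\xi;\theta^\eps)\leq V(t,\xi)+\eps$ a.s.

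The main obstacle is the measurability of the pasting step: one must verify that $\theta^\eps$ constructed as an $\sF_t$-measurable superposition of countably many $\theta^{\eps,j}\in\cU$ indeed belongs to $\cU$, and that the $\eps$-optimal selectors $\theta^{\eps,j}$ can be chosen simultaneously for all $j$ (which is why the countability of the partition matters, and where the measurable selection theorem in Appendix A is ultimately invoked). A secondary point is that the Lipschitz continuity of $x\mapsto V(t,x)$ used in the cube argument requires a preliminary regularity result for the value function; in the structure of the paper this is provided by Proposition \ref{prop-value-func} (proved in Appendix B), so in context I would cite it here rather than re-prove it. Once these ingredients are in place, both assertions reduce to the flow and stability estimates of Lemma \ref{lem-SDE}.
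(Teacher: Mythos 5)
The paper itself offers no proof of Lemma \ref{lem-J}: it is dispatched with a citation to \cite[Theorem 4.7 and Lemma 6.5]{Peng-DPP-1997}. Your sketch reconstructs essentially that standard argument, and its architecture (simple initial data plus the flow property for the first identity; stability from Lemma \ref{lem-SDE}(iv) and the Lipschitz bounds in $(\cA1)$(iii) to pass to general $\xi$; a Borel partition of $\bR^d$ with pasted controls for the essential-infimum identity) is exactly the technique the paper itself deploys later in the proof of Theorem \ref{thm-DPP}, so the approach is sound and in the spirit of the source. Your observation that citing Proposition \ref{prop-value-func}(iv) for the Lipschitz continuity of $V(t,\cdot)$ is not circular is also correct, since estimate \eqref{est-hder} is derived directly from Lemma \ref{lem-SDE} and $(\cA1)$ without using Lemma \ref{lem-J}.

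One step as written would not go through: you select $\theta^{\eps,j}\in\cU$ with $J(t,x_j;\theta^{\eps,j})\leq V(t,x_j)+\eps/2$ holding $\omega$-pointwise (on $\{\xi\in C_j\}$). Since $V(t,x_j)$ is an essential infimum of a downward-directed family, one only gets a decreasing sequence $J(t,x_j;\theta^n)\downarrow V(t,x_j)$ a.s.\ (Remark \ref{rmk-infn}), hence $\eps$-optimality \emph{in expectation} as in Proposition \ref{prop-value-func}(i); a single control that is $\eps/2$-optimal for almost every $\omega$, let alone on a prescribed event, need not exist. The repair is exactly the device used in the proof of Theorem \ref{thm-DPP}: take $\theta^j$ with $E[J(t,x_j;\theta^j)-V(t,x_j)]=E[\alpha^j]<\eps/2^j$, carry the error term $\sum_j\alpha^j 1_{\{\xi\in C_j\}}$ through the pasting, take expectations, and combine with the already-established pointwise inequality $V(t,\xi)\leq\essinf_{\theta}J(t,\xi;\theta)$ to conclude a.s.\ equality. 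A minor additional point: no measurable selection theorem is needed here, since the partition is countable and one simply chooses one control per cell; Theorem \ref{thm-MS} is reserved in the paper for selections over $(\omega,s)$ in the Hamiltonian and over $x\in\bR^d$ in Theorem \ref{thm-max}.
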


Some regular properties of the value function $V$ are then given below.
\begin{prop}\label{prop-value-func}
Let $(\cA 1)$ hold.
\\[4pt]
(i) For each  $t\in[0,T]$ and $\xi\in L^0(\Omega,\sF_t;\bR^d)$, there exists $\bar{\theta}\in\cU$ such that
$$
E\left[ J(t,\xi;\bar{\theta})-V(t,\xi)\right]<\eps.
$$
(ii) For each $(\bar{\theta},x)\in\cU\times \bR^d$, $\left\{J(t,X_t^{0,x;\bar\theta};\bar{\theta})-V(t,X_t^{0,x;\bar\theta})\right\}_{t\in[0,T]}$ is a supermartingale, i.e.,  for any $0\leq t\leq \tilde{t}\leq T$,
\begin{align}
V(t,X_t^{0,x;\bar\theta})
\leq E_{\sF_t}V(\tilde{t},X_{\tilde{t}}^{0,x;\bar{\theta}}) + E_{\sF_t}\int_t^{\tilde{t}}f(s,X_s^{0,x;\bar{\theta}},\bar\theta_s)\,ds,\,\,\,\text{a.s.}\label{eq-vfunc-supM}
\end{align}
(iii) For each $(\bar{\theta},x)\in\cU\times \bR^d$, $\left\{V(s,X_s^{0,x;\bar{\theta}})\right\}_{s\in[0,T]}$ is a continuous process.
\\[3pt]
(iv) 
 There exists $L_V>0$ such that for any $\theta\in\cU$
$$
|V(t,x)-V(t,y)|+|J(t,x;\theta)-J(t,y;\theta)|\leq L_V|x-y|,\,\,\,\text{a.s.},\quad \forall\,x,y\in\bR^d,
$$
with $L_V$ depending only on $T$ and the uniform Lipschitz constants of the coefficients $\beta,\sigma,f$ and $G$ w.r.t. the spatial variable $x$.
\\[3pt]
(v) With probability 1, $V(t,x)$ and $J(t,x;\theta)$ for each $\theta\in\cU$ are continuous  on $[0,T]\times\bR^d$ and 
$$ \sup_{(t,x)\in[0,T]\times\bR^d}   \max\left\{|V(t,x)|,\,|J(t,x;\theta)| \right\}       \leq L(T+1) \quad\text{a.s.}$$
\end{prop}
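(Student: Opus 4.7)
The plan is to treat the easier parts first and leverage them for the harder ones, in the order (iv) and the boundedness in (v), then (i), then (ii), and finally (iii) and the continuity in (v). Part (iv) and the uniform bound in (v) follow from the representation in Lemma~\ref{lem-J}: writing $J(t,x;\theta)-J(t,y;\theta)$ as a conditional expectation of the differences of $f$ and $G$ along the trajectories $X^{t,x;\theta}$ and $X^{t,y;\theta}$, the uniform $x$-Lipschitz assumption in $(\cA1)$ combined with the SDE stability estimate Lemma~\ref{lem-SDE}(iv) yields the Lipschitz bound, and taking essinf transfers it to $V$; the bound $|J(t,x;\theta)|\leq L(T+1)$ is immediate from the $L^\infty$ bounds on $f$ and $G$ in $(\cA1)$.

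For (i), the key is that the family $\{J(t,\xi;\theta):\theta\in\cU\}$ is directed downward: given $\theta^1,\theta^2\in\cU$, the set $A=\{J(t,\xi;\theta^1)\leq J(t,\xi;\theta^2)\}$ lies in $\sF_t$, so $\theta_s=\theta^1_s\mathbf 1_A+\theta^2_s\mathbf 1_{A^c}$ is admissible in $\cU$ and yields $J(t,\xi;\theta)=\min\{J(t,\xi;\theta^1),J(t,\xi;\theta^2)\}$ a.s. Standard essinf theory then produces a sequence $\theta_n\in\cU$ with $J(t,\xi;\theta_n)\downarrow V(t,\xi)$ a.s.; the uniform bound from the previous paragraph justifies dominated convergence and gives the required $\bar\theta=\theta_{n}$ for $n$ large.

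Part (ii) is the heart of the argument and the principal obstacle. Lemma~\ref{lem-SDE}(i) combined with Lemma~\ref{lem-J} applied at time $\tilde t$ yields the flow identity
\[
J(t,X_t^{0,x;\bar\theta};\bar\theta)=E_{\sF_t}\!\left[\int_t^{\tilde t}\!\! f(s,X_s^{0,x;\bar\theta},\bar\theta_s)\,ds+J(\tilde t,X_{\tilde t}^{0,x;\bar\theta};\bar\theta)\right]\ \text{a.s.},
\]
which reduces the supermartingale property of $J-V$ to the one-sided DPP bound \eqref{eq-vfunc-supM}. I would prove that bound by fixing $\eps>0$ and using the measurable selection theorem of Appendix A, together with the $x$-Lipschitz estimate (iv), to construct a $(\sP\otimes\cB(\bR^d))$-measurable map $(\omega,s,\eta)\mapsto\theta^\eta_s(\omega)$ such that $\theta^\eta\in\cU$ for every $\eta\in\bR^d$ and $J(\tilde t,\eta;\theta^\eta)\leq V(\tilde t,\eta)+\eps$. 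Concatenating $\bar\theta$ on $[0,\tilde t]$ with the evaluation $\theta^{X_{\tilde t}^{0,x;\bar\theta}}$ on $(\tilde t,T]$ produces a control in $\cU$ whose dynamic cost is within $\eps$ of the right-hand side of \eqref{eq-vfunc-supM}; sending $\eps\downarrow 0$ closes the argument. This selection-and-pasting step is the main obstacle: because coefficients are only $\omega$-measurable and no topology on $\Omega$ is available, one cannot argue pathwise and must carefully ensure $\sP$-measurability of the concatenated control.

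For (iii) and the joint continuity in (v), (ii) shows that $Y_s:=V(s,X_s^{0,x;\bar\theta})+\int_0^s\!f(r,X_r^{0,x;\bar\theta},\bar\theta_r)\,dr$ is a bounded submartingale and hence admits a càdlàg modification. Running the pasting argument of (ii) with an $\eps$-optimal control on each subinterval $[s,\tilde s]$ furnishes the reverse inequality up to an $\eps$-error, so the successive increments of $Y$ agree with the increments of a martingale in the limit $\eps\downarrow 0$; together with the continuity of the absolutely continuous term $\int_0^\cdot f\,dr$, this forces $V(\cdot,X_\cdot^{0,x;\bar\theta})$ to be continuous. Finally, combining continuity along every such trajectory with the $x$-Lipschitz bound (iv) and the path continuity of $X$ yields joint continuity of $V$ on $[0,T]\times\bR^d$ a.s.; the analogous statement for $J(\cdot,\cdot;\theta)$ follows from the same reasoning with essinf removed.
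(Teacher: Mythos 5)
Your treatment of (i), (iv) and the uniform bound in (v) matches the paper's: the downward-directedness of $\{J(t,\xi;\theta)\}_{\theta\in\cU}$ via switching on an $\sF_t$-measurable set, the resulting decreasing sequence $\theta^n$ with $J(t,\xi;\theta^n)\downarrow V(t,\xi)$ a.s., and the Lipschitz/boundedness estimates from $(\cA1)$ and Lemma \ref{lem-SDE} are exactly Remark \ref{rmk-infn} and estimate \eqref{est-hder}. Your route to (iii) and the joint continuity in (v) is also in the same spirit as the paper (continuity of $t\mapsto EV(t,X_t^{0,x;\bar\theta})$ plus super/submartingale regularization, then combining with (iv)), though you leave implicit the right-continuity of $t\mapsto EY_t$ needed for the c\`adl\`ag modification and the left-continuity step, which the paper handles via estimate \eqref{est-prop-cont} and BSDE theory.

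The gap is in (ii). You propose to build, for every $\eta\in\bR^d$, a control $\theta^\eta\in\cU$ with $J(\tilde t,\eta;\theta^\eta)\leq V(\tilde t,\eta)+\eps$ a.s., jointly measurably in $(\omega,s,\eta)$, by invoking Theorem \ref{thm-MS}. Two things fail here. First, Theorem \ref{thm-MS} selects points of a Polish space $\cO$ from a closed-valued multifunction; it does not produce a $\sP\otimes\cB(\bR^d)$-measurable family of admissible \emph{processes} $\theta^\eta$, nor does it guarantee that the evaluation $\theta^{X^{0,x;\bar\theta}_{\tilde t}}$ is predictable. Second, and more fundamentally, for a fixed $\eta$ no single control need satisfy $J(\tilde t,\eta;\theta)\leq V(\tilde t,\eta)+\eps$ \emph{almost surely}; the essinf only yields controls that are $\eps$-optimal in expectation (your own part (i)), or a monotone sequence converging a.s. This is precisely why the paper's proof of the DPP (Theorem \ref{thm-DPP}) replaces the selection by a countable Borel partition $\{D^j\}$ with error terms $\alpha^j\geq 0$, $E|\alpha^j|<\eps/2^j$, summed at the end. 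For Proposition \ref{prop-value-func}(ii) itself the paper avoids all spatial selection: since Lemma \ref{lem-J} gives $V(\tilde t,\xi)=\essinf_{\theta}J(\tilde t,\xi;\theta)$ for the \emph{random} initial state $\xi=X^{0,x;\bar\theta}_{\tilde t}$, Remark \ref{rmk-infn} applied at that random state produces one decreasing sequence $\theta^n$ with $J(\tilde t,X^{0,x;\bar\theta}_{\tilde t};\theta^n)\downarrow V(\tilde t,X^{0,x;\bar\theta}_{\tilde t})$ a.s.; monotone convergence lets one exchange $\lim_n$ with $E_{\sF_t}$, and the concatenation of $\bar\theta$ with each $\theta^n$ is manifestly in $\cU$, which gives \eqref{eq-vfunc-supM} directly. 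You should either adopt that argument or replace your selection step by the partition-and-$\alpha^j$ device; as written, the selection claim does not go through.
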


The proof of Proposition \ref{prop-value-func} is more or less standard and it is put in Appendix \ref{appdx-proof}. We then turn to present a generalized dynamic programming principle.


\begin{thm}\label{thm-DPP}
Let assumption $(\cA1)$ hold. For any stopping times $\tau,\hat\tau $ with $\tau\leq \hat\tau \leq T$, and any $ \xi\in L^p(\Omega,\sF_{\tau};\bR^d)$ for some $p\in[1,\infty]$, we have
\begin{align*}
V(\tau,\xi)=\essinf_{\theta\in\cU} E_{\sF_{\tau}}
\left[
\int_{\tau}^{\hat\tau} f\left(s,X_s^{\tau,\xi;\theta},\theta_s\right)\,ds + V\left(\hat\tau,X^{\tau,\xi;\theta}_{\hat \tau}\right)
\right] \quad a.s.
\end{align*}
\end{thm}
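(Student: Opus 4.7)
The plan is to split the equality into the two customary inequalities, which I will label $(\leq)$ and $(\geq)$, and rely crucially on the flow property of Lemma \ref{lem-SDE}(i), the tower representation of $J(t,\xi;\theta)$ in Lemma \ref{lem-J}, the approximate optimality in Proposition \ref{prop-value-func}(i), and a measurable selection argument of the kind promised in Appendix A.

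\textbf{Upper bound $(\leq)$.} Fix any $\theta\in\cU$. Since $X^{\tau,\xi;\theta}_s=X^{\hat\tau,X^{\tau,\xi;\theta}_{\hat\tau};\theta}_s$ for $s\in[\hat\tau,T]$ by Lemma \ref{lem-SDE}(i), I would apply Lemma \ref{lem-J} at the stopping times $\tau$ and $\hat\tau$ to write
\begin{align*}
J(\tau,\xi;\theta)
=E_{\sF_\tau}\Bigl[\int_\tau^{\hat\tau}\! f(s,X^{\tau,\xi;\theta}_s,\theta_s)\,ds
+J(\hat\tau,X^{\tau,\xi;\theta}_{\hat\tau};\theta)\Bigr]
\geq E_{\sF_\tau}\Bigl[\int_\tau^{\hat\tau}\! f(s,X^{\tau,\xi;\theta}_s,\theta_s)\,ds
+V(\hat\tau,X^{\tau,\xi;\theta}_{\hat\tau})\Bigr].
\end{align*}
Taking $\essinf$ over $\theta\in\cU$ on both sides, using Lemma \ref{lem-J} for the left-hand side, yields $(\leq)$. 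A minor technical point is that Lemma \ref{lem-J} is stated for deterministic $t$, so I would first extend it to stopping times by approximation with finitely-valued stopping times and the continuity in Proposition \ref{prop-value-func}(iii)--(v).

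\textbf{Lower bound $(\geq)$.} This is the substantive direction. The strategy is a pasting construction: given any $\theta^1\in\cU$ restricted to $[\![\tau,\hat\tau]\!]$ and any $\theta^2\in\cU$ on $[\![\hat\tau,T]\!]$, the concatenation $\theta^1\otimes_{\hat\tau}\theta^2$ belongs to $\cU$, and by Lemma \ref{lem-SDE}(i) and Lemma \ref{lem-J},
\begin{align*}
V(\tau,\xi)\le J(\tau,\xi;\theta^1\otimes_{\hat\tau}\theta^2)
=E_{\sF_\tau}\Bigl[\int_\tau^{\hat\tau}\! f(s,X^{\tau,\xi;\theta^1}_s,\theta^1_s)\,ds
+J\bigl(\hat\tau,X^{\tau,\xi;\theta^1}_{\hat\tau};\theta^2\bigr)\Bigr].
\end{align*}
Fix $\eps>0$. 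The aim is to choose $\theta^2$ so that $J(\hat\tau,X^{\tau,\xi;\theta^1}_{\hat\tau};\theta^2)\le V(\hat\tau,X^{\tau,\xi;\theta^1}_{\hat\tau})+\eps$ in expectation. Here I would invoke a measurable selection theorem (Appendix A), applied to the multifunction $\eta\mapsto\{\theta\in\cU:E[J(\hat\tau,\eta;\theta)-V(\hat\tau,\eta)]<\eps\}$ on an appropriate partition, to produce a single $\bar\theta^\eps\in\cU$ defined on $[\![\hat\tau,T]\!]$ such that
\begin{align*}
J\bigl(\hat\tau,X^{\tau,\xi;\theta^1}_{\hat\tau};\bar\theta^\eps\bigr)
\le V\bigl(\hat\tau,X^{\tau,\xi;\theta^1}_{\hat\tau}\bigr)+\eps'
\end{align*}
with $\eps'$ small on a set of large probability. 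To make this rigorous I would first cover $\bR^d$ by small balls, use Proposition \ref{prop-value-func}(i) on each ball centered at a fixed $x_k$ to get $\theta^{\eps,k}\in\cU$ approximating $V(\hat\tau,x_k)$, and then paste these via a measurable partition of $\Omega$ induced by $X^{\tau,\xi;\theta^1}_{\hat\tau}$; the Lipschitz regularity of $V$ and $J$ in $x$ from Proposition \ref{prop-value-func}(iv) controls the error incurred by replacing $X^{\tau,\xi;\theta^1}_{\hat\tau}$ by the grid points. Combining these steps and sending $\eps\downarrow 0$ gives
\begin{align*}
V(\tau,\xi)\le \essinf_{\theta^1\in\cU}E_{\sF_\tau}\Bigl[\int_\tau^{\hat\tau}\! f(s,X^{\tau,\xi;\theta^1}_s,\theta^1_s)\,ds
+V(\hat\tau,X^{\tau,\xi;\theta^1}_{\hat\tau})\Bigr],
\end{align*}
which is $(\geq)$.

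\textbf{Main obstacle.} The routine parts are the flow identity and the pasting step; the real difficulty is the measurable, $\sF_{\hat\tau}$-adapted selection of an $\eps$-optimal control $\bar\theta^\eps$ that simultaneously approximates $V(\hat\tau,\cdot)$ at the $\sF_{\hat\tau}$-measurable random point $X^{\tau,\xi;\theta^1}_{\hat\tau}$, uniformly enough to pass to the limit $\eps\downarrow 0$. This is complicated by the non-Markovian setting (the coefficients depend on $\omega$), by the fact that $\hat\tau$ is a stopping time (so Lemma \ref{lem-J} must first be lifted from deterministic times), and by the need to handle unbounded $\xi\in L^p$ which I would truncate and recover via the a.s.\ Lipschitz estimate of Proposition \ref{prop-value-func}(iv). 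These are precisely the points where the measurable selection theorem from Appendix A and the pathwise continuity from Proposition \ref{prop-value-func}(iii),(v) will be indispensable.
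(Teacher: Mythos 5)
Your proposal follows essentially the same route as the paper's proof: the easy inequality comes from the tower property of $J$ together with $J\geq V$, and the substantive inequality from a countable partition of $\bR^d$ of small diameter, $\eps$-optimal controls at the grid points obtained from Proposition \ref{prop-value-func}(i), pasting at $\hat\tau$ through the partition induced by $X^{\tau,\xi;\theta}_{\hat\tau}$, and the Lipschitz estimate of Proposition \ref{prop-value-func}(iv). The only cosmetic difference is that the paper does not need the Appendix A selection theorem at this point (the explicit countable pasting you describe suffices) and closes the argument by taking expectations and letting $\eps\downarrow 0$ rather than passing to an almost-sure limit directly.
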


A proof of the generalized dynamic programming principle can be found  in \cite[Theorem 6.6]{Peng-DPP-1997} where the coefficients are required to be uniformly $\alpha$-H$\ddot{o}$lder ($\alpha \in (0,1]$) continuous in the control $\theta$. For the reader's convenience, we provide a concise proof below, since we assume only the uniform continuity of the coefficients ($\beta$, $\sigma$ and $f$) in the control $\theta$. 

\begin{proof}[Proof of Theorem \ref{thm-DPP}]
Denote the right hand side by $\overline V(\tau,\xi)$. In view of the definition for $V(t,x)$, we have obviously $\overline V(\tau,\xi)\leq V(\tau,\xi)$. 

For each $\eps>0$, by Proposition \ref{prop-value-func} (iv), there exists $\delta=\eps/L_V>0$ such that whenever $|x-y|<\delta$,
$$
|J(\hat\tau,x;\theta)- J(\hat\tau,y;\theta)  | + 
|V(\hat\tau,x)-V(\hat\tau,y)|
\leq \eps \quad \text{a.s., }\forall \, \theta\in\cU.
$$
Let $\{D^{j}\}_{j\in\bN^+}$ be a Borel partition of $\bR^d$ with diameter diam$(D^j)<\delta$, i.e., $D^j\in \cB(\bR^d)$, $\cup_{j\in\bN^+}D^j=\bR^d$, $D^i\cap D^j=\emptyset$ if $i\neq j$, and for any $x,y\in D^j$, $|x-y|<\delta$. For each $j\in\bN^+$, choose $x^j\in D^j$ and there exists $\theta^j\in\cU$ such that 
$$0\leq J(\hat\tau,x^j;\theta^j) -V(\hat\tau,x^j):= \alpha^j\quad \text{a.s., with } E|\alpha^j|<\frac{\eps}{2^j}.$$
Thus, for any $x\in D^j$,
\begin{align*}
&J(\hat\tau,x;\theta^j) -V(\hat\tau,x)\\
&\leq 
	|J(\hat\tau,x;\theta^j) -J(\hat\tau,x^j;\theta^j)|
	+|J(\hat\tau,x^j;\theta^j)-V(\hat\tau,x^j)|
	+|V(\hat\tau,x^j) -V(\hat\tau,x)|
\\
&\leq 2\,\eps+\alpha^j,\quad \text{a.s.}
\end{align*}

For any $\theta\in\cU$, set
\[  \tilde{\theta}_s=
\begin{cases}
\theta_s, \quad & \text{if }s\in[0,\hat\tau);\\
\sum_{j\in\bN^+} \theta^j_s1_{D^j}(X^{\tau,\xi;\theta}_{\hat\tau}),\quad &\text{if }s\in [\hat\tau,T].\\
\end{cases}
\]
Then it follows that
\begin{align*}
V(\tau,\xi)
&\leq J(\tau,\xi;\tilde\theta)\\
&=	E_{\sF_{\tau}}\left[
	\int_{\tau}^{\hat \tau}  
		f\left(s,X_s^{\tau,\xi;\theta},\theta_s\right)\,ds
	+J\left(\hat\tau,X_{\hat\tau}^{\tau,\xi;\theta};\tilde\theta\right)
	\right]\\
&\leq
	E_{\sF_{\tau}}\left[
	\int_{\tau}^{\hat \tau}  
		f\left(s,X_s^{\tau,\xi;\theta},\theta_s\right)\,ds
		+V\left(\hat\tau,X_{\hat\tau}^{\tau,\xi;\theta}\right)
		+\sum_{j\in\bN^+}\alpha^j
	\right]+2\,\eps
\end{align*}
where $\{\alpha^j\}$ is independent of the choices of $\theta$. Taking infimums and then expectations on both sides, we arrive at
\begin{align*}
EV(\tau,\xi) \leq
E \overline V(\tau,\xi) + 3\,\eps.
\end{align*}
By the arbitrariness of $\eps>0$, we have $E\overline V(\tau,\xi)\geq  EV(\tau,\xi)$, which together with the obvious relation $\overline V(\tau,\xi)\leq V(\tau,\xi)$ yields
that $\overline V(\tau,\xi)=V(\tau,\xi)$ a.s. 
\end{proof}


\section{Existence of viscosity solutions for stochastic HJB equations}

\subsection{Definition of viscosity solutions}\label{sec:def}

\begin{defn}\label{defn-testfunc}
For $u\in \cS^{2} (C(\bR^d))\cap \cL^2(C^2(\bR^d))$, we say $u\in \mathscr C_{\sF}^2$ if 
there exists $(\mathfrak{d}_tu, \,\mathfrak{d}_{\omega}u)\in \cL^2(C(\bR^d))\times  \cL^2(C^1(\bR^d))$ such that with probability 1
\begin{align*}
u(r,x)=u(T,x)-\int_r^T \mathfrak{d}_su(s,x)\,ds -\int_r^T\mathfrak{d}_{\omega}u(s,x)\,dW_s,\quad \forall\,(r,x)\in[0,T]\times\bR^d.
\end{align*}
\end{defn}


The fact $u\in \mathscr C_{\sF}^2 $ indicates that $\{u(t,x)\}_{0\leq t \leq T}$ is an It\^o process and thus a semi-martingale for each $x\in\bR^d$. Doob-Meyer decomposition theorem implies the uniqueness of the pair $(\mathfrak{d}_tu,\,\mathfrak{d}_{\omega}u)$. In this sense, by Definition \ref{defn-testfunc}, not only is the space $\mathscr C_{\sF}^2$ characterized, but the two linear operators $\mathfrak{d}_t$ and $\mathfrak{d}_{\omega}$ are also defined. In fact, during the finalization of this work, we found that Le$\tilde{\text{a}}$o, Ohashi and Simas \cite{Leao-etal-2018} had just defined a kind of weak differentiability of square-integrable It\^o processes w.r.t. $W$, and  for each $u\in \mathscr C_{\sF}^2$ and $x\in\bR^d$, the process $\{u(t,x)\}_{t\in [0,T]}$  can be thought of as an It\^o process and a straightforward application of \cite[Theorem 4.3]{Leao-etal-2018} indicates that $\mathfrak{d}_t$ and $\mathfrak{d}_{\omega}$ coincide with the two differential operators w.r.t. the paths of Wiener process $W$ in the sense of \cite{Leao-etal-2018}, which are defined via  a finite-dimensional approximation procedure based on controlled inter-arrival times and approximating martingales.   In particular, if $u(t,x)$ is a deterministic time-space function, BSDE theory yields that $\mathfrak{d}_{\omega}u \equiv 0$ and $\mathfrak{d}_tu$ coincides with the classical partial derivative in time; if the random function $u(t,x)$ is regular enough, its existing Malliavin derivative is nothing but $\mathfrak{d}_{\omega}u$. 

The linear operators $\mathfrak{d}_t$ and $\mathfrak{d}_{\omega}$ can be extended onto different spaces. In fact, the space $ \mathscr C^2_{\sF}$ is defined in an analogous way to the stochastic Banach spaces $\mathscr H^k_2$ in \cite[Definition 4.1]{DuQiuTang10} where for each element $u$ it is required that $u(T,\cdot)\in H^{k-1,2}$, $\mathfrak{d}_tu\in \cL^2(H^{k-2,2})$ and $\mathfrak{d}_{\omega}u\in \cL^2(H^{k-1,2})$. We would also note that the operators $\mathfrak{d}_t$ and $\mathfrak{d}_{\omega}$ here are different from the path derivatives $(\partial_t,\,\partial_{\omega})$ via the functional It\^o formulas (see \cite{buckdahn2015pathwise} and \cite[Section 2.3]{ekren2016viscosity-1}). If $u(\omega,t,x)$ is smooth enough w.r.t. $(\omega,t)$ in the path space,  for each $x$,  we have the relation 
$$\mathfrak{d}_tu(\omega,t,x)=\left(\partial_t+\frac{1}{2}\partial^2_{\omega\omega}\right)u(\omega,t,x),\quad \mathfrak{d}_{\omega}u(\omega,t,x)=  \partial_{\omega}u(\omega,t,x),$$      which can be seen either from the applications in \cite[Section 6]{ekren2016viscosity-1} to BSPDEs or from a rough view on the pathwise viscosity solution of (forward) SPDEs in \cite{buckdahn2015pathwise}.


For each stopping time $t\leq T$, denote by $\mathcal{T}^t$ the set of stopping times $\tau$ valued in $[t,T]$ and by $\mathcal{T}^t_+$ the subset of $\mathcal{T}^t$ such that $\tau>t$ for any $\tau\in \mathcal{T}^t_+$. For each $\tau\in\mathcal T^0$ and $\Omega_{\tau}\in\sF_{\tau}$, we denote by $L^0(\Omega_{\tau},\sF_{\tau};\bR^d)$ the set of $\bR^d$-valued $\sF_{\tau}$-measurable functions. 

We now introduce the notion of viscosity solutions. For each $(u,\tau)\in \cS^{2}(C_{loc}(\bR^d))\times \mathcal T^0$, $\Omega_{\tau}\in\sF_{\tau}$ with $\mathbb P(\Omega_{\tau})>0$ and $\xi\in L^0(\Omega_{\tau},\sF_{\tau};\bR^d)$, we define
\begin{align*}
\underline{\mathcal{G}}u(\tau,\xi;\Omega_{\tau}):=\bigg\{
\phi\in\mathscr C^2_{\sF}:(\phi-u)(\tau,\xi)1_{\Omega_{\tau}}=0=\essinf_{\bar\tau\in\mathcal T^{\tau}} E_{\sF_{\tau}}\left[\inf_{y\in B_{\delta}(\xi)}
(\phi-u)(\bar\tau\wedge \hat{\tau},y)
\right]1_{\Omega_{\tau}}  \text{ a.s.}
&\\
\text{for some } (\delta,\hat\tau) \in (0,\infty)\times\mathcal T^{\tau}_+\,&
\bigg\},\\
\overline{\mathcal{G}}u(\tau,\xi;\Omega_{\tau}):=\bigg\{
\phi\in\mathscr C^2_{\sF}:(\phi-u)(\tau,\xi)1_{\Omega_{\tau}}=0=\esssup_{\bar\tau\in\mathcal T^{\tau}} E_{\sF_{\tau}}\left[\sup_{y\in B_{\delta}(\xi)}
(\phi-u)(\bar\tau\wedge \hat{\tau},y)
\right]1_{\Omega_{\tau}}  \text{ a.s.}
&\\
\text{for some } (\delta,\hat\tau) \in (0,\infty)\times\mathcal T^{\tau}_+\,&
\bigg\}.
\end{align*}
It is obvious that if $\underline{\mathcal{G}}u(\tau,\xi;\Omega_{\tau})$ or $\overline{\mathcal{G}}u(\tau,\xi;\Omega_{\tau})$ is nonempty, we must have $0\leq\tau <T$ on $\Omega_{\tau}$.  

Now it is at the stage to introduce the definition of viscosity solutions.

\begin{defn}\label{defn-viscosity}
We say $u\in \cS^2(C_{loc}(\bR^d))$ is a viscosity subsolution (resp. supersolution) of BSPDE \eqref{SHJB}, if $u(T,x)\leq (\text{ resp. }\geq) G(x)$ for all $x\in\bR^d$ a.s., and for any $\tau\in  \mathcal T^0$, $\Omega_{\tau}\in\sF_{\tau}$ with $\mathbb P(\Omega_{\tau})>0$ and $\xi\in L^0(\Omega_{\tau},\sF_{\tau};\bR^d)$ and any $\phi\in \underline{\cG}u(\tau,\xi;\Omega_{\tau})$ (resp. $\phi\in \overline{\cG}u(\tau,\xi;\Omega_{\tau})$), there holds
\begin{align}
&\text{ess}\liminf_{(s,x)\rightarrow (\tau^+,\xi)}
	E_{\sF_{\tau}} \left\{ -\mathfrak{d}_{s}\phi(s,x)-\bH(s,x,D\phi(s,x),D^2\phi(s,x),D\mathfrak{d}_{\omega}\phi (s,x)) \right\}  \leq\ \,0,
\label{defn-vis-sub}
\end{align}
for almost all $\omega\in\Omega_{\tau}$ (resp.
\begin{align}
&\text{ess}\limsup_{(s,x)\rightarrow (\tau^+,\xi)} 
	E_{\sF_{\tau}} 
		\left\{ -\mathfrak{d}_{s}\phi(s,x)-\bH(s,x,D\phi(s,x),D^2\phi(s,x),D\mathfrak{d}_{\omega}\phi (s,x)) \right\}  \geq\ \,0,
\label{defn-vis-sup}
\end{align}
for almost all $\omega\in\Omega_{\tau}$).

Equivalently, $u\in \cS^2(C_{loc}(\bR^d))$ is a viscosity subsolution (resp. supersolution) of BSPDE \eqref{SHJB}, if $u(T,x)\leq (\text{ resp. }\geq) G(x)$ for all $x\in\bR^d$ a.s. and for any $\tau\in  \mathcal T^0$, $\Omega_{\tau}\in\sF_{\tau}$ with $\mathbb P(\Omega_{\tau})>0$ and $\xi\in L^0(\Omega_{\tau},\sF_{\tau};\bR^d)$ and any $\phi\in \mathscr C^2_{\sF}$, whenever there exist $\eps>0$, $\tilde\delta>0$ and $\Omega_{\tau}'\subset\Omega_{\tau}$ such that $\Omega'_{\tau}\in\sF_{\tau}$, $\bP(\Omega'_{\tau})>0$ and
{\small
\begin{align*}
&
	\essinf_{(s,x)\in Q^+_{\tilde\delta}(\tau,\xi) \cap Q}
	E_{\sF_{\tau}} \left\{ -\mathfrak{d}_{s}\phi(s,x)-\bH(s,x,D\phi(s,x),D^2\phi(s,x),D\mathfrak{d}_{\omega}\phi (s,x)) \right\} \geq  \eps \text{ a.e. in }\Omega'_{\tau}
\\
(\text{resp. }
&\esssup_{(s,x)\in Q^+_{\tilde\delta}(\tau,\xi) \cap Q}
	E_{\sF_{\tau}} \left\{ -\mathfrak{d}_{s}\phi(s,x)-\bH(s,x,D\phi(s,x),D^2\phi(s,x),D\mathfrak{d}_{\omega}\phi (s,x)) \right\} \leq  -\eps \text{ a.e. in }\Omega'_{\tau}),
\end{align*}
}
then $\phi\notin \underline{\cG}u(\tau,\xi;\Omega_{\tau})$ (resp. $\phi\notin \overline{\cG}u(\tau,\xi;\Omega_{\tau})$).

The function $u$ is a viscosity solution of BSPDE \eqref{SHJB} if it is both a viscosity subsolution and a viscosity supersolution of \eqref{SHJB}.
\end{defn}

 The test function space $ \mathscr C_{\sF}^2$ is expected to include the classical solutions of BSPDEs (see \cite{DuQiuTang10,Tang-Wei-2013} for instance). However, it is typical that the classical solutions $u$ may not be  differentiable in the time variable $t$ and $(\mathfrak{d}_tu,\mathfrak{d}_{\omega}u)$ may not be time-continuous but just measurable in $t$, which is also reflected in Definition \ref{defn-testfunc}.  This nature motivates us to use essential limits in \eqref{defn-vis-sub} and \eqref{defn-vis-sup}.  
 
 \begin{rmk}
 In view of Definition \ref{defn-viscosity}, we see:
 \\
 (i) The viscosity property is not discussed $\omega$-wisely but defined for each $(t,x,\Omega_{t})\in [0,T)\times \bR^d\times \sF_t$ with $\Omega_t$ being a sample set with positive probability. The viscosity property of function $u$ at $(t,x,\Omega_t)$ can be determined by  its values on $\left( Q^+_{\delta}(t,x)  \cap Q \right)\times \Omega_{t}$ equipped with appropriate filtration for any small $\delta>0$. In this sense, the viscosity property is local.
\\
(ii) As usual, when $ \underline{\cG}u(\tau,\xi;\Omega_{\tau})$ (resp. $ \overline{\cG}u(\tau,\xi;\Omega_{\tau})$) is empty, $u$ is automatically satisfying the viscosity subsolution (resp. supersolution) property at $(\tau,\xi)$.
\\
(iii) As standard in the literature of viscosity solutions for deterministic PDEs, we can choose smaller sets of test functions.  This will make the verifications of existence of viscosity solutions easier but complicate the uniqueness arguments. 
\end{rmk}

\begin{rmk}\label{rmk-defn-vs}
In the classical Markovian case where all the involved coefficients in problem \eqref{Control-probm} are deterministic, BSPDE \eqref{SHJB} becomes a deterministic parabolic PDE. Refined equivalent definitions of viscosity solutions for deterministic parabolic PDEs can be found in \cite{crandall2000lp,juutinen2001definition}.  In fact, by reversing time and using deterministic test functions, one can check that the definition above is consistent with the usual one for the continuous viscosity solutions, the difference being that our test functions are richer.

When the coefficients $\beta,\sigma,f$ and $G$ are deterministic functions of time $t$, control $\theta$ and the paths of $X$ and $W$, the optimal stochastic control problem is beyond the classical Markovian framework. Nevertheless, if one thinks of the $X$ and $W$ as state processes valued in the path space, the value function is deterministic and it can be characterized by a path-dependent PDE on the infinite-dimensional path space. We refer to \cite{ekren2014viscosity,ekren2016viscosity-1,peng2011note,tang2012path} for the theory of viscosity solutions of  such nonlinear path-dependent PDEs.  In fact, the authors in \cite{ekren2014viscosity,ekren2016viscosity-1} applied the path-dependent viscosity solution theory to some classes of stochastic HJB equations like \eqref{SHJB} which, however, required all the coefficients to be continuous in $\omega\in \Omega$ so that both the test functions and attained viscosity solution can be discussed pointwisely, while in this work, all the involved coefficients are only measurable w.r.t. $\omega\in  \Omega$ without any specified topology on $\Omega$, which along with the $\sigma$-algebras on $\Omega$ motivates us to discuss the test functions and viscosity solutions for each nontrivial measurable set $\Omega_{\tau}$ instead of defining viscosity solutions in a pointwise manner, and this method, in the definitions of test functions spaces $ \underline{\cG}u(\tau,\xi;\Omega_{\tau})$ and $ \overline{\cG}u(\tau,\xi;\Omega_{\tau})$, allows us to avoid the usage of nonlinear expectations that is an important technique in  \cite{ekren2014viscosity,ekren2016viscosity-1} to characterize the test functions.

\end{rmk}

To simplify the notations and involved techniques, we consider only the bounded continuous viscosity solutions in this paper and postpone to a future work more remarks on the viscosity solutions.

\subsection{Existence of the viscosity solution}

We first apply an It\^o-Kunita formula by Kunita \cite[Pages 118-119]{kunita1981some} to the composition of random fields and our controlled stochastic differential equations. Throughout this work, we define for any $\phi\in\mathscr C^2_{\sF}$ and $v\in U$,
\begin{align*}
\mathscr L^{v}\phi(t,x)=
 \mathfrak{d}_t \phi (t,x)+\text{tr}\left(\frac{1}{2}\sigma \sigma'(t,x,v) D^2\phi(t,x)
     	+\sigma(t,x,v) D\mathfrak{d}_{\omega}\phi(t,x)\right)       +D\phi(t,x)\beta(t,x,v).
\end{align*}
\begin{lem}\label{lem-ito-wentzell}
  Let assumption $(\cA1)$ hold.
 Suppose  $u\in\mathscr C_{\sF}^2$.
   Then, for each $x\in\bR^d$ and $\theta\in\cU$, it holds almost surely that, for all $t\in[0,T]$,
    \begin{align*}
 & u(t,X^{0,x;\theta}_t) -u(0,x)\\
&= 
     	\!\int_0^t    \mathscr L^{\theta_s} u\left(s,X^{0,x;\theta}_s \right)    \,ds      +\int_0^t\left( \mathfrak{d}_{\omega}u(r,X^{0,x;\theta}_r)    
     + Du(r,X^{0,x;\theta}_r)\sigma(r,X^{0,x;\theta}_r,\theta_r)\right)\,dW_r.
   \end{align*}
\end{lem}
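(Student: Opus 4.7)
The plan is to invoke the It\^o--Wentzell formula of Kunita~\cite{kunita1981some} applied to the composition of the semimartingale random field $u$ with the controlled diffusion $X^{0,x;\theta}$. The defining property $u\in\mathscr C^2_{\sF}$ supplies exactly the hypotheses of that formula: by Definition~\ref{defn-testfunc}, for every fixed $x\in\bR^d$ the process $\{u(t,x)\}_{t\in[0,T]}$ is a continuous It\^o semimartingale with drift density $\mathfrak{d}_tu(t,x)$ and martingale coefficient $\mathfrak{d}_{\omega}u(t,x)$, while the memberships $u\in\cL^2(C^2(\bR^d))$ and $\mathfrak{d}_{\omega}u\in\cL^2(C^1(\bR^d))$ give the continuous spatial derivatives $Du$, $D^2u$ and $D\mathfrak{d}_{\omega}u$ needed for the chain rule in $x$ and for the new cross-variation contribution. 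Thus the only genuine work is a localization argument and the identification of the coefficients.

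I would localize by $\tau_N:=\inf\{t\ge 0:|X^{0,x;\theta}_t|\ge N\}\wedge T$; on $[0,t\wedge\tau_N]$ the state stays in $\overline{B_N(0)}$, where all the objects above are uniformly controlled and square-integrable in $(\omega,s)$. Applying Kunita's formula (pages 118--119 of \cite{kunita1981some}) to $u(t\wedge\tau_N,X^{0,x;\theta}_{t\wedge\tau_N})$ produces the It\^o terms from the time-semimartingale structure of $u$, the chain-rule contributions in $x$, the usual It\^o correction $\tfrac12\mathrm{tr}(\sigma\sigma'(s,X_s,\theta_s)D^2u(s,X_s))\,ds$, and the key cross-variation term
\[
\int_0^{t\wedge\tau_N}\mathrm{tr}\bigl(\sigma(s,X_s,\theta_s)\,D\mathfrak{d}_{\omega}u(s,X_s)\bigr)\,ds
\]
arising from the bracket between the martingale part of $u(\cdot,x)$ and that of $X$. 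Substituting $dX_s=\beta(s,X_s,\theta_s)\,ds+\sigma(s,X_s,\theta_s)\,dW_s$ and regrouping recovers the drift $\mathscr L^{\theta_s}u(s,X_s)\,ds$ and the diffusion $\bigl(\mathfrak{d}_{\omega}u(s,X_s)+Du(s,X_s)\sigma(s,X_s,\theta_s)\bigr)\,dW_s$ precisely as in the statement.

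Finally, by Lemma~\ref{lem-SDE} the running maximum of $X^{0,x;\theta}$ admits all polynomial moments, so $\tau_N\uparrow T$ a.s.; each term in the expansion is dominated by an $L^2(\Omega\times[0,T])$-integrable bound through the $\cS^2/\cL^2$ regularity of $u$, $\mathfrak{d}_tu$, $\mathfrak{d}_{\omega}u$ and their spatial derivatives combined with the SDE moment bounds, so Lebesgue and It\^o dominated convergence upgrade the identity to all of $[0,T]$. The main obstacle I anticipate is at the application step: one must check that the It\^o--Wentzell formula of \cite{kunita1981some} really accommodates a drift density $\mathfrak{d}_tu$ that is only measurable (not continuous) in $t$ and a martingale coefficient $\mathfrak{d}_{\omega}u$ whose smoothness in $x$ is only of class $C^1$ almost everywhere in $(\omega,t)$; this is fine because Kunita's formulation demands merely time-integrability of the drift coefficient and pointwise $C^1$ regularity in $x$ of the martingale coefficient, both of which are built into the definition of $\mathscr C^2_{\sF}$.
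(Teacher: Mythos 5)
Your proposal is correct and follows exactly the route the paper takes: the paper offers no written proof of this lemma, simply invoking the It\^o--Kunita formula from pages 118--119 of \cite{kunita1981some} applied to the composition of the random field $u$ with the controlled state process, which is precisely what you do, with the localization, identification of the cross-variation term $\mathrm{tr}(\sigma\, D\mathfrak{d}_{\omega}u)$, and passage to the limit spelled out. The regularity checks you flag (measurability in $t$ of $\mathfrak{d}_tu$ and the $C^1$/$C^2$ spatial regularity from Definition \ref{defn-testfunc}) are indeed the hypotheses Kunita's formulation requires, so the application is legitimate.
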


\begin{thm}\label{thm-existence}
 Let $(\cA1)$ hold. The value function $V$ defined by \eqref{eq-value-func} is a viscosity solution of the stochastic Hamilton-Jacobi-Bellman equation \eqref{SHJB}.
\end{thm}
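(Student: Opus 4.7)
The plan is to verify the terminal identity $V(T,x)=G(x)$ directly from \eqref{eq-cost-funct}--\eqref{eq-value-func}, and then to prove the viscosity subsolution and supersolution properties separately, each by contradiction using the equivalent characterization given at the bottom of Definition \ref{defn-viscosity}. The hinge is the identity
\begin{equation*}
\mathfrak{d}_s\phi + \mathbb H(s,x,D\phi,D^2\phi,D\mathfrak{d}_\omega\phi) = \essinf_{v\in U}\{\mathscr L^v\phi(s,x)+f(s,x,v)\},
\end{equation*}
which translates the viscosity inequality into a probabilistic statement accessible through the dynamic programming principle (Theorem \ref{thm-DPP}) and the It\^o-Kunita formula (Lemma \ref{lem-ito-wentzell}).

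For the subsolution property I would fix $\phi\in\underline{\cG}V(\tau,\xi;\Omega_\tau)$ with associated parameters $(\delta,\hat\tau)$ and assume, toward a contradiction, the existence of $\eps>0$, $\tilde\delta\in(0,\delta]$, and $\Omega'_\tau\in\sF_\tau$ with $\bP(\Omega'_\tau)>0$ such that the essinf of $E_{\sF_\tau}\{-\mathfrak{d}_s\phi-\mathbb H\}$ over $Q^+_{\tilde\delta}(\tau,\xi)\cap Q$ is at least $\eps$ on $\Omega'_\tau$. By the continuity of $\mathscr L^v\phi+f$ in $v\in U$ on the compact set $U$ and the measurable selection tool of Appendix A, I would extract a $\sP\otimes\cB(\bR^d)$-measurable selector $v^\star(s,x)\in U$ with
\begin{equation*}
E_{\sF_\tau}\{\mathscr L^{v^\star(s,x)}\phi(s,x)+f(s,x,v^\star(s,x))\}\leq -\eps/2 \text{ on } \Omega'_\tau
\end{equation*}
for each $(s,x)\in Q^+_{\tilde\delta}(\tau,\xi)\cap Q$. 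Setting $\theta_s:=v^\star(s,X_s^{\tau,\xi;\theta})$ (after a piecewise-constant-in-$x$ regularization of $v^\star$ if needed to ensure strong solvability of the SDE) and $\rho:=\hat\tau\wedge(\tau+\tilde\delta^2)\wedge\inf\{s>\tau:|X_s^{\tau,\xi;\theta}-\xi|\geq\tilde\delta\}\in\cT^\tau_+$, I would then chain three ingredients: (i) DPP, $V(\tau,\xi)\leq E_{\sF_\tau}[\int_\tau^\rho f(s,X_s,\theta_s)\,ds+V(\rho,X_\rho^{\tau,\xi;\theta})]$; (ii) the defining property of $\underline{\cG}V$ applied to the stopping time $\rho$, which together with $X_\rho\in\overline{B_\delta(\xi)}$ gives $E_{\sF_\tau}[(\phi-V)(\rho,X_\rho)]\geq 0$ on $\Omega_\tau$; and (iii) the It\^o-Kunita identity $E_{\sF_\tau}[\phi(\rho,X_\rho)]=\phi(\tau,\xi)+E_{\sF_\tau}[\int_\tau^\rho\mathscr L^{\theta_s}\phi(s,X_s)\,ds]$ from Lemma \ref{lem-ito-wentzell}, whose stochastic-integral term vanishes under $E_{\sF_\tau}$. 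Combined with $\phi(\tau,\xi)=V(\tau,\xi)$ on $\Omega_\tau$, these produce $0\leq E_{\sF_\tau}[\int_\tau^\rho(\mathscr L^{\theta_s}\phi+f)(s,X_s)\,ds]$, which the selection inequality drives below $-(\eps/2)E_{\sF_\tau}[\rho-\tau]<0$ on $\Omega'_\tau$, the sought contradiction.

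The supersolution argument is dual. The contradiction hypothesis now reads $\essinf_{v\in U}E_{\sF_\tau}\{\mathscr L^v\phi+f\}(s,x)\geq\eps$, which holds \emph{simultaneously} for every $v\in U$ on the hemisphere, so measurable selection is unnecessary. Since $V$ is an infimum (and I cannot merely pick an adversarial control), I would instead take an $\eta$-optimal control $\theta^\eta$ from Proposition \ref{prop-value-func}(i), push the DPP inequality in the opposite direction, apply $\phi\leq V$ locally (from $\phi\in\overline{\cG}V$) at $(\rho,X_\rho)$, invoke It\^o-Kunita, and conclude by letting $\eta\downarrow 0$.

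The central obstacle I anticipate is the Fubini/tower-type interchange inside the subsolution step: the pointwise-in-$(s,x)$ bound on $E_{\sF_\tau}\{\mathscr L^{v^\star(s,x)}\phi+f\}(s,x)$ must be transplanted to the random composition $(s,X_s^{\tau,\xi;\theta})$, although $X_s^{\tau,\xi;\theta}$ is $\sF_s$- but not $\sF_\tau$-measurable. I plan to address this by exploiting the spatial equicontinuity in $x$ of the family $\{E_{\sF_\tau}[\mathscr L^v\phi+f]\}_{v\in U}$ on $\overline{B_{\tilde\delta}(\xi)}$, which follows from $\phi\in\mathscr C^2_\sF$ and $(\cA1)$, together with the moment estimates of Lemma \ref{lem-SDE}, possibly at the cost of shrinking $\tilde\delta$ further. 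A secondary issue is that the feedback SDE with a merely measurable $v^\star$ need not have a strong solution; the standard remedy is to approximate $v^\star$ by step-function-in-$x$ selections on a fine partition of $\overline{B_{\tilde\delta}(\xi)}$, solve each regularized SDE, and pass to the limit in the key estimate.
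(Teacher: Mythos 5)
Your overall architecture --- terminal condition, then the sub- and supersolution properties each by contradiction, combining measurable selection, the DPP of Theorem \ref{thm-DPP}, the It\^o--Kunita formula of Lemma \ref{lem-ito-wentzell}, and a localizing exit time with the fourth-moment estimate from Lemma \ref{lem-SDE} --- is exactly the paper's. The supersolution half is essentially the paper's argument; note only that the paper does not need an $\eta$-optimal control at all, since the contradiction hypothesis bounds $E_{\sF_\tau}\{\mathscr L^{\theta_s}\phi+f\}$ from below \emph{uniformly} over $\theta\in\cU$, so one keeps the $\essinf_{\theta}$ inside the DPP identity and bounds it from below directly; also, Proposition \ref{prop-value-func}(i) gives near-optimality for $J(\tau,\xi;\cdot)$, not for the DPP functional at the stopping horizon $\rho$, so your version would need an extra supermartingale step to transfer it.

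The real divergence is in the subsolution step, and it is where your plan has a genuine weak point. You select $v^\star(s,x)$ on the whole hemisphere and close the loop with the feedback control $\theta_s=v^\star(s,X_s)$. This creates the two obstacles you yourself flag, and your remedies are not adequate as stated: a piecewise-constant-in-$x$ regularization of $v^\star$ makes the drift $\beta(t,x,v^\star_{\mathrm{reg}}(t,x))$ discontinuous in $x$, so strong solvability of the regularized SDE is not covered by $(\cA1)$ and the ``standard remedy'' does not obviously go through; and the transplant of the bound on $E_{\sF_\tau}\{\cdots\}(s,x)$ to the random point $X_s$ remains delicate because $v^\star(s,X_s)$ is not $\sF_\tau$-measurable. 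The paper sidesteps both issues with one observation: it performs the measurable selection only at the \emph{frozen}, $\sF_\tau$-measurable point $\xi$, i.e.\ it picks $\bar\theta_s(\omega)$ $\omega$-wise $\eps$-optimal for $\esssup_{v\in U}\{-\mathscr L^{v}\phi(s,\xi)-f(s,\xi,v)\}$. This $\bar\theta$ depends only on $(\omega,s)$, hence is an honest open-loop element of $\cU$ and the controlled SDE is the standard one; the discrepancy between $\mathscr L^{\bar\theta_s}\phi(s,X_{s\wedge\tilde\tau})+f(s,X_{s\wedge\tilde\tau},\bar\theta_s)$ and the same expression at $\xi$ is then controlled \emph{pathwise} (the control is the same in both terms) inside the conditional expectation, using $X_{s\wedge\tilde\tau}\in B_{\tilde\delta/2}(\xi)$, the spatial regularity of $\phi\in\mathscr C^2_{\sF}$ and of the coefficients under $(\cA1)$, and the exit-time estimate, before letting $h\downarrow0$. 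If you specialize your selector to $x=\xi$ you recover precisely this argument; as written, your route leaves the feedback-SDE solvability unresolved.
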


\begin{proof}
\textbf{Step 1.}
First, in view of Proposition \ref{prop-value-func}, we have $V\in\cS^{\infty}(C_{loc}(\bR^d))$. For each $\phi\in \underline\cG V(\tau,\xi;\Omega_{\tau})$ with $\tau\in \mathcal T^0$, $\Omega_{\tau}\in\sF_{\tau}$, $\mathbb P(\Omega_{\tau})>0$ and $\xi\in L^0(\Omega_{\tau},\sF_{\tau};\bR^d)$, let $(\hat{\tau},\,B_{\delta}(\xi))$ be the pair corresponding to $\phi\in \underline\cG V(\tau,\xi;\Omega_{\tau})$. 

Suppose to the contrary that there exist $\eps, \tilde\delta>0$ and $\Omega'\in\sF_{\tau}$ such that $\Omega'\subset \Omega_{\tau}$, $\bP(\Omega')>0$ and 
{\small
\begin{align}
\essinf_{(s,x)\in Q^+_{\tilde{\delta}}(\tau,\xi) \cap Q} 
	E_{\sF_{\tau}}
		\{ -\mathfrak{d}_{s}\phi(s,x)-\bH(s,x,D\phi(s,x),D^2\phi(s,x),D\mathfrak{d}_{\omega}\phi (s,x))\}  \geq 2\,\eps,\quad \text{a.e. in }\Omega'. \label{eq-ex-1}
\end{align}
}
By assumption (ii) of $(\cA 1)$ and the measurable selection theorem (see Theorem \ref{thm-MS}), there exists $\bar\theta\in \cU$ such that for almost all $\omega\in\Omega_{\tau}$,
$$
- \mathscr L^{\bar\theta_s}\phi(s,\xi)  -f(s,\xi,\bar\theta_s)   \geq   -\mathfrak{d}_{s}\phi(s,\xi)-\bH(s,\xi,D\phi(s,\xi),D^2\phi(s,\xi),D\mathfrak{d}_{\omega}\phi (s,\xi)) -       \eps
$$
for almost all $s$ satisfying $\tau\leq s<T$. This together with \eqref{eq-ex-1} implies 
\begin{align*}
\essinf_{\tau\leq s<\left(\tau+\tilde\delta^2\right)\wedge T} 
	E_{\sF_{\tau}}
		\{ - \mathscr L^{\bar\theta_s}\phi(s,\xi)  -f(s,\xi,\bar\theta_s) \}  \geq \eps,\quad \text{a.e. in }\Omega'.
\end{align*}
W.l.o.g., assume $\tilde{\delta}<\delta<1$. Define
$\tilde \tau=\inf\{ s>\tau: X^{\tau,\xi;{\bar\theta}}_s \notin B_{\tilde\delta/2}(\xi) \}$. Then $ \tilde \tau>\tau $ and moreover, for any $h>0$
 \begin{align}
 E_{\sF_{\tau}}\left[1_{\{\tilde \tau <\tau+h\}}\right]
 &=
 	E_{\sF_{\tau}}\left[1_{\{ \max_{\tau\leq s\leq \tau +h} |X_s^{\tau,\xi;\bar\theta} -\xi|>\frac{\tilde\delta}{2} \}}\right]  
\nonumber\\
&
 \leq 
 	\frac{16}{{\tilde\delta}^4}  E_{\sF_{\tau}}  \max_{\tau\leq s\leq \tau +h} |X_s^{\tau,\xi;\bar\theta} -\xi|^4
\nonumber\\
&
\leq 
	\frac{16K}{{\tilde\delta}^4} (1+|\xi|^4) h^2 \quad \text{ a.s.},
	\label{est-tau}
 \end{align}
where $K$ is from Lemma \ref{lem-SDE} and does not depend on the control $\bar\theta$.

By the dynamic programming principle of Theorem \ref{thm-DPP} and the  It\^o-Kunita formula of Lemma \ref{lem-ito-wentzell}, we have for any small $0<h<\tilde\delta^2/4$ and almost all $\omega\in\Omega'$, 
\begin{align*}
0 
&\geq \frac{1}{h}
	E_{\sF_{\tau}}\left[ 
		  (\phi-V)(\tau,\xi) -(\phi-V)\left((\tau+h)\wedge \tilde\tau\wedge\hat\tau,X_{(\tau+h)\wedge \tilde\tau\wedge\hat\tau}^{\tau,\xi;\bar\theta}\right)   \right]\\
&\geq \frac{1}{h } 
	E_{\sF_{\tau}} \left[ 
		   \phi(\tau,\xi) - \phi\left((\tau+h)\wedge \tilde\tau\wedge\hat\tau,X_{(\tau+h)\wedge \tilde\tau\wedge\hat\tau}^{\tau,\xi;\bar\theta}\right) 
		   -\int_{\tau}^{(\tau+h)\wedge \tilde\tau\wedge\hat\tau} f(s,X_s^{\tau,\xi;\bar\theta},\bar\theta_s)\,ds    \right]
\\
&=\frac{1}{h} E_{\sF_{\tau}}
	\int_{\tau}^{(\tau+h)\wedge \tilde\tau\wedge\hat\tau}
		   \left( -\mathscr L^{\bar\theta_s}  \phi(s,X_s^{\tau,\xi;\bar\theta})		- f(s,X_s^{\tau,\xi;\bar\theta},\bar\theta_s)   \right)\,ds
 \\
 &\geq
 	\frac{1}{h} E_{\sF_{\tau}} \bigg[
		\int_{\tau}^{(\tau+h)\wedge T}
		   \left( -\mathscr L^{\bar\theta_{s}}  \phi(s,X_{s\wedge \tilde\tau}^{\tau,\xi;\bar\theta})		- f(s,X_{s\wedge \tilde\tau}^{\tau,\xi;\bar\theta},\bar\theta_{s})   \right)\,ds
\\
&    \quad\quad\quad\quad
		   -  2\cdot 1_{\{\tau+h>\hat\tau\}\cup \{\tau+h>\tilde\tau\}   }
		   	\int_{\tau}^{T}
		  	 \left|   -\mathscr L^{\bar\theta_s}  \phi(s,X_{s\wedge \tilde\tau}^{\tau,\xi;\bar\theta})		- f(s,X_{s\wedge \tilde\tau}^{\tau,\xi;\bar\theta},\bar\theta_{s})   \right|\,ds
			 \bigg]
\\
&\geq
 	\frac{1}{h} E_{\sF_{\tau}} \bigg[
		\int_{\tau}^{(\tau+h)\wedge T}
		   \left( -\mathscr L^{\bar\theta_{s}}  \phi(s,\xi)		- f(s,\xi,\bar\theta_{s})   \right)\,ds
\\
&\quad\quad
		-\int_{\tau}^{ (\tau+h)\wedge T}
		   \left| -\mathscr L^{\bar\theta_{s}}  \phi(s,\xi)		- f(s,\xi,\bar\theta_{s})  +  \mathscr L^{\bar\theta_{s}}  \phi(s,X_{s\wedge \tilde\tau}^{\tau,\xi;\bar\theta})
		   +  f(s,X_{s\wedge \tilde\tau}^{\tau,\xi;\bar\theta},\bar\theta_{s})
		   \right|\,ds
\\
&    \quad\quad\quad\quad
		   -  2\cdot 1_{\{\tau+h>\hat\tau\}\cup \{\tau+h>\tilde\tau\}   }
		   	\int_{\tau}^{T}
		  	 \left|   -\mathscr L^{\bar\theta_s}  \phi(s,X_{s\wedge \tilde\tau}^{\tau,\xi;\bar\theta})		- f(s,X_{s\wedge \tilde\tau}^{\tau,\xi;\bar\theta},\bar\theta_{s})   \right|\,ds
			 \bigg]
\\
&\geq
	\frac{(T\wedge(\tau+h))-\tau}{h}\cdot \eps-o(1),
\end{align*}
where the term $o(1)$ that tends to zero as $h\rightarrow 0^+$ consists of three facts:
(i) the spatial regularity of $\phi\in\mathscr C^2_{\sF}$ indicates that as $h\rightarrow 0^+$, for almost all $\omega\in\Omega'$
$$
\frac{1}{h}\int_{\tau}^{ (\tau+h)\wedge T}
		   \left| -\mathscr L^{\bar\theta_{s}}  \phi(s,\xi)		- f(s,\xi,\bar\theta_{s})  +  \mathscr L^{\bar\theta_{s}}  \phi(s,X_{s\wedge \tilde\tau}^{\tau,\xi;\bar\theta})
		   +  f(s,X_{s\wedge \tilde\tau}^{\tau,\xi;\bar\theta},\bar\theta_{s})
		   \right|\,ds
		   \rightarrow 0;
$$
(ii) for almost all $\omega\in\Omega'$, $\tau +h <\hat\tau$ when $h>0$ is small enough;
(iii) estimate \eqref{est-tau}.
This incurs a contradiction as $h$ tends to zero and thus, for almost all $\omega\in\Omega_{\tau}$ 
\begin{align*}
&\text{ess}\liminf_{(s,x)\rightarrow (\tau^+,\xi)}
	E_{\sF_{\tau}} \left\{ -\mathfrak{d}_{s}\phi(s,x)-\bH(s,x,D\phi(s,x),D^2\phi(s,x),D\mathfrak{d}_{\omega}\phi (s,x)) \right\}  \leq\ \,0.
\end{align*}
Hence, $V$ is a viscosity subsolution of BSPDE \eqref{SHJB}.\medskip

\textbf{Step 2.}
It remains to prove that $V$ is a viscosity supersolution of \eqref{SHJB}. Let $\phi\in \overline\cG V(\tau,\xi;\Omega_{\tau})$ with $\tau\in \mathcal T^0$, $\Omega_{\tau}\in\sF_{\tau}$, $\mathbb P(\Omega_{\tau})>0$ and $\xi\in L^0(\Omega_{\tau},\sF_{\tau};\bR^d)$. Let $(\hat{\tau},\,B_{\delta}(\xi))$ be the pair corresponding to $\phi\in \overline\cG V(\tau,\xi;\Omega_{\tau})$. 


We argue with contradiction like in \textbf{Step 1}, To the contrary, assume that there exist $\eps, \tilde\delta>0$ and $\Omega'\in\sF_{\tau}$ such that $\Omega'\subset \Omega_{\tau}$, $\bP(\Omega')>0$ and 
\begin{align*}
\esssup_{(s,x)\in Q^+_{\tilde{\delta}}(\tau,\xi) \cap Q} 
	E_{\sF_{\tau}}
		\{ -\mathfrak{d}_{s}\phi(s,x)-\bH(s,x,D\phi(s,x),D^2\phi(s,x),D\mathfrak{d}_{\omega}\phi (s,x))\}  \leq -\eps,\quad \text{a.e. in }\Omega'.
\end{align*}
W.l.o.g., assume $\tilde{\delta}<\delta<1$. For each $\theta\in\cU$, define
$\tau^{\theta}=\inf\{ s>\tau: X^{\tau,\xi;{\theta}}_s \notin B_{\tilde\delta/2}(\xi) \}$. Estimate \eqref{est-tau} still holds.

By Theorem \ref{thm-DPP} and Lemma \ref{lem-ito-wentzell}, we have for any small $h\in(0,\tilde\delta^2/4)$ and almost all $\omega\in\Omega'$, 
\begin{align*}
0
&\geq
	\frac{V(\tau,\xi)-\phi(\tau,\xi)}{h}
\\
&=
	\frac{1}{h} \essinf_{\theta\in\cU} E_{\sF_{\tau}}\left[
	\int_{\tau}^{\hat{\tau} \wedge (\tau+h)} 
		f(s,X_s^{\tau,\xi;\theta},\theta_s)\,ds
		+V\left(\hat{\tau} \wedge (\tau+h), X_{\hat{\tau}\wedge (\tau+h)}^{\tau,\xi;\theta}\right) -\phi(\tau,\xi)
		\right]
\\
&\geq
	\frac{1}{h} \essinf_{\theta\in\cU} E_{\sF_{\tau}}\left[
	\int_{\tau}^{\hat{\tau} \wedge (\tau+h)} 
		f(s,X_s^{\tau,\xi;\theta},\theta_s)\,ds
		+\phi\left(\hat{\tau} \wedge (\tau+h), X_{\hat{\tau}\wedge (\tau+h)}^{\tau,\xi;\theta}\right) -\phi(\tau,\xi)
		\right]
\\
&=
	\frac{1}{h} \essinf_{\theta\in\cU} E_{\sF_{\tau}}\left[
	\int_{\tau}^{\hat{\tau} \wedge (\tau+h)} 
		\left( \mathscr L^{\theta_s}\phi\left(s,X_s^{\tau,\xi;\theta}\right)+f(s,X_s^{\tau,\xi;\theta},\theta_s)\right)\,ds
		\right]
\\
&\geq
	\frac{1}{h} \essinf_{\theta\in\cU} E_{\sF_{\tau}}\bigg[
		\int_{\tau}^{\tau^{\theta}\wedge (\tau+h)\wedge \hat\tau} \bigg(
		\mathscr L^{\theta_s}\phi\left(s,X_s^{\tau,\xi;\theta} \right)+f(s,X_s^{\tau,\xi;\theta},\theta_s)
      		  \bigg) \,ds\\
&	\quad\quad\quad
		  -1_{\{\hat\tau >\tau^{\theta}\}\cap\{\tau+h>		\tau^{\theta}\}}   \int_{\tau}^{T} \left| 
	\mathscr L^{\theta_s}\phi\left(s,X_s^{\tau,\xi;\theta} \right)+f(s,X_s^{\tau,\xi;\theta},\theta_s)
      		  \right| \,ds
		\bigg]
\\
&\geq
	\frac{1}{h} \essinf_{\theta\in\cU} E_{\sF_{\tau}}\bigg[
		\int_{\tau}^{(\tau+h)\wedge T} \bigg(
		\mathscr L^{\theta_s}\phi\left(s,X_{s\wedge\tau^{\theta}}^{\tau,\xi;\theta} \right)+f(s,X_{s\wedge\tau^{\theta}}^{\tau,\xi;\theta},\theta_s)
      		  \bigg) \,ds\\
&	\quad
		  -2\left(1_{\{\hat\tau >\tau^{\theta}\}\cap\{\tau+h>\tau^{\theta}\}}  	+  1_{\{\tau+h>\hat\tau\}\cup \{\tau+h>\tau^{\theta}\}   } \right)	   \int_{\tau}^{T} \left| 
	\mathscr L^{\theta_s}\phi\left(s,X_s^{\tau,\xi;\theta} \right)+f(s,X_s^{\tau,\xi;\theta},\theta_s)
      		  \right| \,ds
		\bigg]
\\
& \geq
\frac{(T\wedge(\tau+h))-\tau}{h}\cdot \eps-o(1),
\end{align*}
where we note that for almost all $\omega\in\Omega'$, $\tau +h <\hat\tau$ when $h>0$ is small enough and this along with estimate \eqref{est-tau} and relation $\phi\in\mathscr C^2_{\sF}$ gives the term $o(1)$ that tends to zero as $h\rightarrow 0^+$. A contradiction occurs as $h$ tends to zero. Thus, for almost all $\omega\in\Omega_{\tau}$ 
\begin{align*}
&\text{ess}\limsup_{(s,x)\rightarrow (\tau^+,\xi)}
	E_{\sF_{\tau}} \left\{ -\mathfrak{d}_{s}\phi(s,x)-\bH(s,x,D\phi(s,x),D^2\phi(s,x),D\mathfrak{d}_{\omega}\phi (s,x)) \right\}  \geq\ \,0.
\end{align*}
Hence, $V$ is a viscosity supersolution of BSPDE \eqref{SHJB}. This completes the proof.

\end{proof}


\section{Uniqueness of the viscosity solution}

The uniqueness consists of two parts. In the first subsection, we prove that the value function is the maximal viscosity (sub)solution of the stochastic HJB  equation \eqref{SHJB} that is fully nonlinear and can be degenerate. In the second subsection, the uniqueness is addressed for the superparabolic cases when the controlled diffusion coefficient $\sigma$ does not depend explicitly on $\omega\in\Omega$.

\subsection{Maximal viscosity solution}

We need further the following assumption.\\[8pt]
 $({\mathcal A} 2)$ \it For each $\theta\in\cU$, there exists $q> 2+\frac{d}{2}$ such that $(G(\cdot),f(\cdot,\cdot,\theta) )  \in     L^{2}(\Omega,\sF_T;H^{q,2}) \times \cL^{2}(H^{q,2}) $, and  $g(\cdot,\cdot,\theta) \in\cL^{\infty}(H^{q,\infty}) $ for $g=\beta^i,\sigma^{ij}$ $(1\leq i \leq d,\,1\leq j\leq m)$.\medskip\\
   
By the theory of degenerate BSPDEs (see  \cite[Theorem 2.1]{DuTangZhang-2013} and \cite[Theorem 4.3]{Du_Zhang_DegSemilin2012}), we have
\begin{lem}\label{lem-dBSPDE}
Let assumptions $(\cA 1)$ and $(\cA 2)$ hold.  For each $\theta\in\cU$, (see  \cite[Theorem 2.1]{DuTangZhang-2013}) the following BSPDE:
{\small
\begin{equation}\label{SHJB-dBSPDE}
  \left\{\begin{array}{l}
  \begin{split}
  -du(t,x)=\,&\displaystyle 
  \bigg\{\text{tr}\left(\frac{1}{2}\sigma \sigma'(t,x,\theta_t) D^2 u(t,x)+\sigma(t,x,\theta_t) D\psi(t,x)\right)
       +Du(t,x)\beta(t,x,\theta_t) \\ &\displaystyle
          \quad\quad\quad+f(t,x,\theta_t)
                \bigg\} \,dt -\psi(t,x)\, dW_{t}, \quad
                     (t,x)\in Q;\\
    u(T,x)=\, &G(x), \quad x\in\bR^d,
    \end{split}
  \end{array}\right.
\end{equation}
}
admits a unique solution $(u,\psi)$ with 
$u\in \cS_w^2(H^{q,2})$
 \footnote{ $\cS_w^2([0,T];H^{q,2})$ 
 denotes the space of all $H^{q,2}$-valued and jointly measurable processes $(u_{t})_{t\in [0,T]}$ which are $\mathscr{F}$-adapted, a.s. weakly continuous with respect to $t$ on $[0,T]$ (i.e., for any $h\in(H^{q,2})^*$, the dual space of $H^{q,2}$, the mapping $t \mapsto h(u_t)$ is a.s. continuous on $[0,T]$), and
$${E} \left[\sup_{t\in[0,T]}\|u_t\|_{H^{q,2}}^2\right] < \infty.$$
}
 and $\psi^k+\sum_{i=1}^dD_{x^i}u\sigma^{ik}(\cdot,\cdot,\theta)\in \cL^2(H^{q,2})$ for $k=1,\dots,m$. As $q> 2+\frac{d}{2}$, (by Sobolev embedding theorem and  \cite[Theorem 4.3]{Du_Zhang_DegSemilin2012}), it holds a.s. that $u(t,x)=J(t,x;\theta)$ for all $(t,x)\in[0,T]\times\bR^d$. 
\end{lem}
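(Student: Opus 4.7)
The statement is essentially a packaging of two known results applied to the linear BSPDE obtained from \eqref{SHJB-dBSPDE} when a fixed control $\theta\in\cU$ is inserted, so my plan is to verify that assumptions $(\cA1)$--$(\cA2)$ match the hypotheses of \cite[Theorem 2.1]{DuTangZhang-2013} and \cite[Theorem 4.3]{Du_Zhang_DegSemilin2012}, and then invoke them in succession.

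First, I would observe that for fixed $\theta\in\cU$ the BSPDE \eqref{SHJB-dBSPDE} is a \emph{linear} degenerate BSPDE whose coefficients are $\sigma(t,x,\theta_t)$, $\beta(t,x,\theta_t)$, $f(t,x,\theta_t)$, and terminal datum $G$. Under $(\cA1)$ these coefficients are $\sP\otimes\cB(\bR^d)$-measurable and bounded, and the super-parabolicity is not required for \cite[Theorem 2.1]{DuTangZhang-2013}, which only needs a structural condition on the diffusion coefficients together with adequate $H^{q,2}$-regularity of $G$ and $f(\cdot,\cdot,\theta)$ and adequate $H^{q,\infty}$-regularity of $\beta^i(\cdot,\cdot,\theta)$ and $\sigma^{ij}(\cdot,\cdot,\theta)$. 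Both requirements are exactly what $(\cA2)$ provides. Applying \cite[Theorem 2.1]{DuTangZhang-2013} then yields existence and uniqueness of a solution $(u,\psi)$ in the asserted function space, namely $u\in \cS^2_w(H^{q,2})$ together with the transformed martingale integrand $\psi^k+\sum_i D_{x^i}u\,\sigma^{ik}(\cdot,\cdot,\theta)\in \cL^2(H^{q,2})$ for each $k$.

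Next, because $q>2+d/2$, the Sobolev embedding $H^{q,2}\hookrightarrow C^2(\bR^d)$ shows that $u$ has a version that is $C^2$ in $x$ (with appropriate time-continuity coming from the $\sS_w^2$ membership and the BSPDE), so the classical solution concept used in \cite[Theorem 4.3]{Du_Zhang_DegSemilin2012} is available to us. Applying a standard It\^o--Kunita formula (of the same type as Lemma \ref{lem-ito-wentzell}) to $u(s,X_s^{t,x;\theta})$ and taking conditional expectations yields the stochastic Feynman--Kac representation
\begin{align*}
u(t,x)=E_{\sF_t}\!\left[\int_t^T f(s,X_s^{t,x;\theta},\theta_s)\,ds+G(X_T^{t,x;\theta})\right]=J(t,x;\theta)
\end{align*}
for every $(t,x)$, almost surely; this is the content of \cite[Theorem 4.3]{Du_Zhang_DegSemilin2012}. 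The continuity in $(t,x)$ on both sides (from the Sobolev embedding on one side and from Proposition \ref{prop-value-func}(v) applied to $J$ on the other) upgrades the a.s.-for-each-$(t,x)$ equality to an a.s.-for-all-$(t,x)$ equality.

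The only genuine checking is thus that $(\cA1)$--$(\cA2)$ deliver the regularity required by the two cited theorems; I do not expect any real obstacle, since $(\cA2)$ was tailored to this purpose (the integrability index $q>2+d/2$ is chosen precisely so the Sobolev embedding gives $C^2$ spatial regularity, which is what the Feynman--Kac identification needs). If anything, the mildly delicate point is to note that the $\psi$ coming from \cite[Theorem 2.1]{DuTangZhang-2013} is only controlled through the transformed combination $\psi^k+\sum_i D_{x^i}u\,\sigma^{ik}(\cdot,\cdot,\theta)$; but since $Du\in\cL^2(H^{q-1,2})$ and $\sigma\in\cL^\infty(H^{q,\infty})$, this combination controls $\psi$ itself in $\cL^2(H^{q-1,2})$, which is more than enough for the It\^o--Kunita step in the Feynman--Kac argument to go through.
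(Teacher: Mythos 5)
Your proposal is correct and follows essentially the same route as the paper, which gives no independent proof of this lemma but simply invokes \cite[Theorem 2.1]{DuTangZhang-2013} for existence and uniqueness in the stated spaces and \cite[Theorem 4.3]{Du_Zhang_DegSemilin2012} together with the Sobolev embedding ($q>2+d/2$ giving $C^2$ spatial regularity) for the Feynman--Kac identification $u=J(\cdot,\cdot;\theta)$. Your additional remarks on recovering $\psi$ from the transformed combination and on upgrading the a.s.-for-each-$(t,x)$ equality to an a.s.-for-all equality via continuity are consistent with what the paper implicitly relies on.
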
   

\begin{rmk}\label{rmk-lem-dBSPDE}
In fact, analogous to \cite[Theorem A.1 and Remark A.1]{Horst-Qiu-Zhang-14}, we have further $J(\cdot,\cdot;\theta)\in \cS^2(H^{q-1,2})$ and as $q>2+d/2$, Sobolev embedding theorem and Proposition \ref{prop-value-func}  yield that $J(\cdot,\cdot;\theta)\in \cL^2(C_0^2(\bR^d))\cap \cS^{\infty}(C_0^1(\bR^d))$ and $J(\cdot,\cdot;\theta)\in \mathscr{C}^2_{\sF}$. In particular, the nonnegative part $J^+(\cdot,\cdot;\theta)\in \cS^{\infty}(C_0(\bR^d))$ and thus $V^+\in\cS^{\infty}(C_0(\bR^d))$. Moreover, with probability 1
$$
-\mathscr L^{\theta_s}J(s,x;\theta)-f(s,x,\theta_s) =0,\quad \text{for all }x\in\bR^d  \text{ and almost all }s\in[0,T),
$$
and thus with probability 1
{\small
\begin{align}
\text{ess}\liminf_{(s,x)\rightarrow (t^+,y)}
	E_{\sF_{t}} \left\{  -\mathfrak{d}_{s}J(s,x;\theta)-\bH(s,x,D J(s,x;\theta),D^2J(s,x;\theta),D\mathfrak{d}_{\omega}J (s,x;\theta)) \right\} \geq 0 \label{property-sup}
\end{align}
}
for  all $(t,y)\in[0,T)\times\bR^d$.
\end{rmk}


\begin{thm}\label{thm-max}
Let $(\cA1)$ and $(\cA2)$ hold.   Let $u$ be a viscosity solution  of the stochastic HJB equation \eqref{SHJB} with $u^+\in \cS^{2}(C_0(\bR^d))$. It holds a.s. that $u(t,x)\leq V(t,x)$ for any $(t,x)\in[0,T]\times\bR^d$, where $V$ is the value function defined by \eqref{eq-value-func}.
\end{thm}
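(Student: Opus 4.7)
The plan is to show that for each $\theta\in\cU$ we have $u\leq J(\cdot,\cdot;\theta)$ almost surely pointwise in $(t,x)$; then $u\leq V=\essinf_{\theta\in\cU}J(\cdot,\cdot;\theta)$ follows at once. Fix $\theta\in\cU$ and abbreviate $J:=J(\cdot,\cdot;\theta)$. By Lemma~\ref{lem-dBSPDE} and Remark~\ref{rmk-lem-dBSPDE}, $J\in\mathscr C^2_{\sF}$; moreover, since $\bH$ is an essential infimum over $v\in U$ and $\theta_s$ is just one particular admissible value, the pointwise identity $-\mathscr L^{\theta_s}J=f(\cdot,\theta_s)$ upgrades to the \emph{classical supersolution} inequality
$$
-\mathfrak{d}_s J(s,x)-\bH\bigl(s,x,DJ,D^2J,D\mathfrak{d}_\omega J\bigr)\geq 0\qquad\text{for a.e.\ }s,\ \text{all }x,\ \text{a.s.}
$$

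I would argue by contradiction. Assume $\bP\{\sup_{(t,x)}(u-J)(t,x)>0\}>0$ and pick $\alpha>0$ so small that $Z(t,x):=u(t,x)-J(t,x)-\alpha(T-t)$ still has positive supremum with positive probability. Since $u^+\in\cS^2(C_0(\bR^d))$ and $J\in\cS^\infty(C_0^1(\bR^d))$ by Remark~\ref{rmk-lem-dBSPDE}, $Z^+$ vanishes as $|x|\to\infty$ and $Z(T,\cdot)\leq 0$, so $\bar Z(t):=\sup_{x\in\bR^d}Z(t,x)$ is attained for every $(\omega,t)$, is continuous in $t$, $\{\sF_t\}$-adapted, bounded, with $\bar Z(T)\leq 0$. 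The crux is the production of a measurable touching triple $(\tau^*,\xi,\Omega_{\tau^*})$, which I would extract from the Snell envelope of $\bar Z^+$,
$$
\tilde Y(t):=\esssup_{\tau\in\mathcal{T}^t}E_{\sF_t}[\bar Z(\tau)^+],
$$
a nonnegative supermartingale with $\tilde Y(T)=0$ and $E[\tilde Y(0)]>0$ (the latter by choosing $\tau$ as a hitting time of a small positive level of $\bar Z$). Let $\tau^*:=\inf\{t\in[0,T]:\tilde Y(t)=\bar Z(t)^+\}\in\mathcal{T}^0$ and $\Omega_{\tau^*}:=\{\bar Z(\tau^*)>0\}\in\sF_{\tau^*}$. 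Classical optimal stopping theory for the bounded continuous obstacle $\bar Z^+$ yields $\bP(\Omega_{\tau^*})>0$, $\tau^*<T$ on $\Omega_{\tau^*}$, and the key supermartingale inequality
$$
\bar Z(\tau^*)=\bar Z(\tau^*)^+\geq E_{\sF_{\tau^*}}[\bar Z(\bar\tau)^+]\geq E_{\sF_{\tau^*}}[\bar Z(\bar\tau)]\quad\text{on }\Omega_{\tau^*},\ \forall\,\bar\tau\in\mathcal{T}^{\tau^*}.
$$
The measurable selection theorem of Appendix~A applied to the continuous random function $Z(\tau^*,\cdot)$ then provides $\xi\in L^0(\Omega_{\tau^*},\sF_{\tau^*};\bR^d)$ with $Z(\tau^*,\xi)=\bar Z(\tau^*)$ on $\Omega_{\tau^*}$.

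The candidate test function is
$$
\phi(s,y):=J(s,y)+\alpha(T-s)+\eta_s,\qquad \eta_s:=E_{\sF_s}[\bar Z(\tau^*)\,1_{\Omega_{\tau^*}}].
$$
Because $\eta$ is a bounded martingale constant in $y$, one checks $\phi\in\mathscr C^2_{\sF}$ with $\mathfrak{d}_s\phi=\mathfrak{d}_s J-\alpha$ and spatial derivatives $(D\phi,D^2\phi,D\mathfrak{d}_\omega\phi)=(DJ,D^2J,D\mathfrak{d}_\omega J)$ identical to those of $J$. A short computation gives $(\phi-u)(\tau^*,\xi)\,1_{\Omega_{\tau^*}}=0$, and from the identity
$$
\inf_{y\in B_\delta(\xi)}(\phi-u)(\sigma,y)=\eta_\sigma-\sup_{y\in B_\delta(\xi)}Z(\sigma,y)
$$
combined with the displayed Snell-envelope inequality, one verifies that for any fixed $\delta>0$ and any $\hat\tau\in\mathcal{T}^{\tau^*}_+$,
$$
\essinf_{\bar\tau\in\mathcal{T}^{\tau^*}}E_{\sF_{\tau^*}}\Bigl[\inf_{y\in B_\delta(\xi)}(\phi-u)(\bar\tau\wedge\hat\tau,y)\Bigr]1_{\Omega_{\tau^*}}=0
$$
(the equality, rather than just $\geq 0$, is realized by $\bar\tau=\tau^*$). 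Hence $\phi\in\underline{\cG}u(\tau^*,\xi;\Omega_{\tau^*})$. On the other hand, the supersolution inequality for $J$ together with $\mathfrak{d}_s\phi=\mathfrak{d}_s J-\alpha$ gives
$$
-\mathfrak{d}_s\phi-\bH(s,y,D\phi,D^2\phi,D\mathfrak{d}_\omega\phi)=-\mathfrak{d}_s J+\alpha-\bH\geq\alpha>0
$$
a.e.\ in $(s,y)$, which through the equivalent strict-inequality formulation in Definition~\ref{defn-viscosity} directly contradicts the viscosity subsolution property of $u$ at $(\tau^*,\xi,\Omega_{\tau^*})$. Thus $u\leq J(\cdot,\cdot;\theta)$ a.s., and taking $\essinf_{\theta\in\cU}$ yields $u\leq V$.

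The main obstacle is precisely the Snell-envelope-plus-measurable-selection construction of the touching triple $(\tau^*,\xi,\Omega_{\tau^*})$: one must arrange a stopping time $\tau^*$, a positive-probability set $\Omega_{\tau^*}\in\sF_{\tau^*}$, and an $\sF_{\tau^*}$-measurable spatial maximizer $\xi$ in such a way that the resulting candidate $\phi$ meets the delicate essinf condition in the definition of $\underline{\cG}u$. Everything else---the classical supersolution inequality for $J$, the invariance of the spatial derivatives of $\phi$ under the martingale correction $\eta$, and the $\alpha$-perturbation producing the strict supersolution gap---is routine once this measurable touching point has been constructed.
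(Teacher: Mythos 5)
Your proposal is correct and follows essentially the same route as the paper's proof: reduce to comparison with a single $J(\cdot,\cdot;\theta)\in\mathscr C^2_{\sF}$, run a Snell-envelope/optimal-stopping argument on the running maximum of $(u-J)^+$ perturbed by a linear-in-time term, use measurable selection to get an $\sF_{\tau^*}$-measurable spatial maximizer, build the test function as $J$ plus the time-linear term plus a martingale correction so that it lands in $\underline{\cG}u(\tau^*,\xi;\Omega_{\tau^*})$, and derive the contradiction from the strict supersolution gap $\alpha>0$ against property \eqref{property-sup}. The only cosmetic differences are the normalization of the time perturbation ($\alpha(T-s)$ versus the paper's $\frac{\alpha(s-t)}{2(T-t)}$) and the explicit choice of $\hat\tau$, neither of which changes the argument.
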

\begin{proof}
We argue by contradiction.  Suppose that with a positive probability, $u(t,x)>V(t,x)$  at some point $(t,x)\in [0,T)\times \bR^d$. Then we have some $\theta\in\cU$ such that $u(t,x)>J(t,x;\theta)$ with a positive probability. Furthermore, there exist constant $\kappa>0$, $\Omega_t\in\sF_t$ and $\xi_t\in L^0(\Omega_t,\sF_t;\bR^d)$ such that $\bP(\Omega_t)>0$ and
$$\alpha:=u(t,\xi_t)-J(t,\xi_t;\theta)=\max_{x\in\bR^d} \{  u(t,x)-J(t,x;\theta) \}\geq \kappa\text{ for almost all }\omega\in\Omega_t,$$ 
where the existence and measurablity  of $\xi_t$ are from the measurable selection (see Theorem \ref{thm-MS}) and the facts $u^+\in \cS^2(C_0(\bR^d))$ and $J(\cdot,\cdot;\theta) \in  \cS^2(C_0(\bR^d))$. W.l.o.g, we take $\Omega_t=\Omega$.

For each $s\in(t,T]$, choose an $\sF_s$-measurable variable $\xi_s$ such that  
$$\left( u(s,\xi_s)-J(s,\xi_s;\theta) \right)^+=\max_{x\in\bR^d}   \left(u(s,x)-J(s,x;\theta)\right)^+ .$$  Set
\begin{align*}
Y_s&=(u(s,\xi_s) -J(s,\xi_s;\theta))^++\frac{\alpha (s-t)}{2(T-t)};\\
Z_s&= \esssup_{\tau\in\cT^s} E_{\sF_s} [Y_{\tau}].
\end{align*}
As $u^+\in \cS^2(C_0(\bR^d))$ and $J(\cdot,\cdot;\theta) \in  \cS^2(C_0(\bR^d))$, it follows obviously the time-continuity of  $\max_{x\in\bR^d}   \left(u(s,x)-J(s,x;\theta)\right)^+$ and thus that of
$\left( u(s,\xi_s)-J(s,\xi_s;\theta) \right)^+$. Therefore, the process $(Y_s)_{t\leq s \leq T}$ has continuous trajectories. 
Define $\tau=\inf\{s\geq t:\, Y_s=Z_s\}$. In view of the optimal stopping theory, observe that
$$
E_{\sF_t}Y_T=\frac{\alpha}{2}
	<\alpha=Y_t\leq Z_t=E_{\sF_t}Y_{\tau} =E_{\sF_t}Z_{\tau}.
$$
It follows that $\bP(\tau<T)>0$. As 
$$
(u(\tau,\xi_{\tau}) -J(\tau,\xi_{\tau};\theta))^++\frac{\alpha (\tau-t)}{2(T-t)}
=Z_{\tau}\geq E_{\sF_{\tau}}[Y_T]= \frac{\alpha}{2},
$$
we have $\bP((u(\tau,\xi_{\tau}) -J(\tau,\xi_{\tau};\theta))^+>0)>0$. Define $\hat\tau=\inf\{s\geq\tau:\, (u(s,\xi_{s}) -J(s,\xi_{s};\theta))^+\leq 0\}$. Obviously, $\tau\leq\hat\tau\leq T$.
 Put $\Omega_{\tau}=\{\tau<\hat\tau\}$. Then $\Omega_{\tau}\in \sF_{\tau}$ and $\bP(\Omega_{\tau})>0$.

Set $\phi(s,x)=J(s,x;\theta)-\frac{\alpha (s-t)}{2(T-t)}+E_{\sF_s}Y_{\tau}$. Then $\phi\in\mathscr{C}^2_{\sF}$ since $J(\cdot,\cdot;\theta)\in \mathscr{C}^2_{\sF}$. For each $\bar\tau\in\cT^{\tau}$, we have for almost all $\omega\in\Omega_{\tau}$, 
\begin{align*}
\left( \phi-u\right)(\tau,\xi_{\tau})
=0=Y_{\tau}-Z_{\tau}
\leq 
Y_{\tau}- E_{\sF_{\tau}}\left[Y_{\bar\tau\wedge \hat{\tau}} \right] 
= E_{\sF_{\tau}} \left[ \inf_{y\in\bR^d} (\phi-u)(\bar\tau\wedge\hat\tau,y)   \right],
\end{align*}
which together with the arbitrariness of $\bar\tau$ implies that $\phi\in \underline{\cG} u(\tau,\xi_{\tau};\Omega_{\tau})$. As $u$ is a viscosity subsolution, by property \eqref{property-sup} it holds that for almost all $\omega\in\Omega_{\tau}$, 
\begin{align*}
0
&\geq
	 \text{ess}\liminf_{(s,x)\rightarrow (\tau^+,\xi_{\tau})}
	E_{\sF_{\tau}} \left\{ -\mathfrak{d}_{s}\phi(s,x)-\bH(s,x,D\phi(s,x),D^2\phi(s,x),D\mathfrak{d}_{\omega}\phi (s,x)) \right\}
\\
&=
	\frac{\alpha}{2(T-t)}
\\
&\quad\quad
	+ \text{ess}\liminf_{(s,x)\rightarrow (\tau^+,\xi_{\tau})}
	E_{\sF_{\tau}} \left\{  -\mathfrak{d}_{s}J(s,x;\theta)-\bH(s,x,D J(s,x;\theta),D^2J(s,x;\theta),D\mathfrak{d}_{\omega}J (s,x;\theta)) \right\}
\\
&\geq
	\frac{\kappa}{2(T-t)}>0.	
\end{align*}
This is an obvious contradiction. 
\end{proof}

In the above proof, we adopt some similar techniques as in \cite[Proposition 5.3]{ekren2016viscosity-1} for the construction of stopping times $\tau$ and $\hat\tau$. Throughout the proof, we see that only the viscosity subsolution property of $u$ and the property \eqref{property-sup} of $J(\cdot,\cdot;{\alpha})\in\mathscr C^2_{\sF}$ with $\left(u(\cdot,\cdot)-J(\cdot,\cdot;\alpha)\right)^+\in  \cS^{2}(C_0(\bR^d))$ are used. Hence, omitting the proofs we have the following two corollaries.
\begin{cor}
Let $(\cA1)$ and $(\cA2)$ hold and $u$ be a viscosity subsolution of BSPDE \eqref{SHJB} with $u^+\in\cS^2(C_0(\bR^d))$. It holds a.s. that $u(t,x)\leq V(t,x)$ for any $(t,x)\in[0,T]\times\bR^d$, with $V$ being the value function defined by \eqref{eq-value-func}.
\end{cor}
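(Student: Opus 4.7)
The plan is to repeat verbatim the contradiction argument of Theorem \ref{thm-max}, exploiting the author's observation that it only uses (i) the viscosity \emph{subsolution} property of $u$, and (ii) the one-sided HJB inequality \eqref{property-sup} for $J(\cdot,\cdot;\theta)$, which comes for free from Lemma \ref{lem-dBSPDE} and Remark \ref{rmk-lem-dBSPDE}. The only new piece of bookkeeping needed is to upgrade the weaker hypothesis $u^+\in\cS^2(C_0(\bR^d))$ into the ambient regularity $(u-J(\cdot,\cdot;\theta))^+\in\cS^2(C_0(\bR^d))$ that is invoked in Theorem \ref{thm-max}.

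First I would argue by contradiction: suppose that with positive probability $u(t,x)>V(t,x)$ at some $(t,x)\in[0,T)\times\bR^d$. Since $V=\essinf_{\theta\in\cU}J(\cdot,\cdot;\theta)$, I can pick $\theta\in\cU$ such that $\bP(u(t,x)>J(t,x;\theta))>0$. By Remark \ref{rmk-lem-dBSPDE}, $J(\cdot,\cdot;\theta)\in\cS^\infty(C_0^1(\bR^d))\cap\mathscr C^2_{\sF}$; combined with $u^+\in\cS^2(C_0(\bR^d))$, the pointwise bound $(u-J)^+\leq u^++|J|$ gives $(u-J(\cdot,\cdot;\theta))^+\in\cS^2(C_0(\bR^d))$, which is continuous in time and vanishes at spatial infinity. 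The measurable selection theorem (Theorem \ref{thm-MS}) then furnishes $\xi_s\in L^0(\Omega,\sF_s;\bR^d)$ attaining the spatial maximum of this positive part, and a constant $\kappa>0$ together with $\Omega_t\in\sF_t$ of positive probability such that $\alpha:=u(t,\xi_t)-J(t,\xi_t;\theta)\geq\kappa$ on $\Omega_t$, which I can take to equal $\Omega$.

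From here the proof proceeds identically to that of Theorem \ref{thm-max}: define $Y_s=(u(s,\xi_s)-J(s,\xi_s;\theta))^++\frac{\alpha(s-t)}{2(T-t)}$, the Snell envelope $Z_s=\esssup_{\tau\in\cT^s}E_{\sF_s}[Y_\tau]$, the stopping times $\tau=\inf\{s\geq t:Y_s=Z_s\}$ and $\hat\tau=\inf\{s\geq\tau:(u(s,\xi_s)-J(s,\xi_s;\theta))^+\leq 0\}$, and the set $\Omega_\tau:=\{\tau<\hat\tau\}\in\sF_\tau$; the comparison $E_{\sF_t}Y_T=\alpha/2<\alpha\leq Z_t$ together with optimal stopping forces $\bP(\Omega_\tau)>0$. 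The test function $\phi(s,x):=J(s,x;\theta)-\frac{\alpha(s-t)}{2(T-t)}+E_{\sF_s}Y_\tau$ lies in $\mathscr C^2_{\sF}$ and in $\underline{\cG}u(\tau,\xi_\tau;\Omega_\tau)$, exactly as in Theorem \ref{thm-max}. Invoking the viscosity subsolution property of $u$ against $\phi$, splitting off the deterministic time drift $\alpha/(2(T-t))\geq\kappa/(2T)>0$, and applying \eqref{property-sup} to handle the residual terms involving $J(\cdot,\cdot;\theta)$, one obtains a strictly positive lower bound on the essential liminf in \eqref{defn-vis-sub}, a contradiction.

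The main obstacle is precisely the integrability upgrade in the first step; once $(u-J(\cdot,\cdot;\theta))^+$ is shown to belong to $\cS^2(C_0(\bR^d))$, the Snell envelope construction, the membership $\phi\in\underline{\cG}u(\tau,\xi_\tau;\Omega_\tau)$, and the final HJB inequality clash go through unchanged, and at no point in the argument is the supersolution property of $u$ required.
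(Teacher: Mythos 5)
Your proposal is correct and follows essentially the same route as the paper: the author explicitly notes after the proof of Theorem \ref{thm-max} that only the viscosity subsolution property of $u$ and property \eqref{property-sup} for $J(\cdot,\cdot;\theta)$, together with $(u-J(\cdot,\cdot;\theta))^+\in\cS^2(C_0(\bR^d))$, are used, and states the corollary with the proof omitted. Your additional observation that $(u-J)^+\le u^+ + |J|$ yields the needed regularity from $u^+\in\cS^2(C_0(\bR^d))$ and $J(\cdot,\cdot;\theta)\in\cS^\infty(C_0^1(\bR^d))$ is exactly the (implicit) bookkeeping the paper relies on.
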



\begin{cor}\label{cor-cmp}
Let $(\cA1)$ hold and $u$ be a viscosity subsolution (resp. supersolution)  of BSPDE \eqref{SHJB}  and $\phi\in\mathscr C^2_{\sF}$ with $(u-\phi)^{+}\in \cS^2(C_0(\bR^d)) $ (resp. $(\phi-u)^{+}\in \cS^2(C_0(\bR^d)) $), $\phi(T,x)\geq (\text{resp. }\leq) G(x)$ for all $x\in\bR^d$ a.s. and with probability 1
{\small
\begin{align*}
\text{ess}\liminf_{(s,x)\rightarrow (t^+,y)}
	E_{\sF_{t}} \left\{  -\mathfrak{d}_{s}\phi(s,x)-\bH(s,x,D \phi(s,x),D^2\phi(s,x),D\mathfrak{d}_{\omega}\phi (s,x)) \right\} \geq 0
\\
\text{(resp. }
\text{ess}\limsup_{(s,x)\rightarrow (t^+,y)}
	E_{\sF_{t}} \left\{  -\mathfrak{d}_{s}\phi(s,x)-\bH(s,x,D \phi(s,x),D^2\phi(s,x),D\mathfrak{d}_{\omega}\phi (s,x)) \right\} \leq 0\text{)}
\end{align*}
}
for  all $(t,y)\in[0,T)\times\bR^d$. It holds a.s. that $u(t,x)\leq$ (resp., $\geq$) $\phi(t,x)$,  $\forall \, (t,x)\in[0,T]\times\bR^d$.
\end{cor}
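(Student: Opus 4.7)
The plan is to mimic the proof of Theorem \ref{thm-max} almost verbatim, with the test function $\phi$ of Corollary \ref{cor-cmp} playing the role that $J(\cdot,\cdot;\theta)$ played there. I will treat only the subsolution case, as the supersolution case follows by applying the subsolution argument to $-u$ and $-\phi$ (or by a symmetric repetition).

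First, I would argue by contradiction and assume that, on a set of positive probability, there is some $(t,x)\in[0,T)\times\bR^d$ with $u(t,x)>\phi(t,x)$. Appealing to the measurable selection theorem (Theorem \ref{thm-MS}) and the assumption $(u-\phi)^+\in\cS^2(C_0(\bR^d))$, I choose $t\in[0,T)$, $\Omega_t\in\sF_t$ with $\bP(\Omega_t)>0$, a constant $\kappa>0$, and an $\sF_t$-measurable $\xi_t$ such that $\alpha:=u(t,\xi_t)-\phi(t,\xi_t)=\max_{x\in\bR^d}(u(t,x)-\phi(t,x))\geq \kappa$ a.s.\ on $\Omega_t$, taking w.l.o.g.\ $\Omega_t=\Omega$. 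For $s\in(t,T]$ I pick an $\sF_s$-measurable $\xi_s$ attaining $\max_x(u(s,x)-\phi(s,x))^+$, which exists by the same measurable selection argument combined with the continuity furnished by $u^+$ and $\phi$ being in $\cS^2(C_0(\bR^d))$ (here $\phi\in\mathscr C_{\sF}^2\subset\cS^2(C(\bR^d))$, and $(u-\phi)^+$ supplies the needed continuity in the difference).

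Next I introduce the Snell-envelope pair exactly as in Theorem \ref{thm-max}: set $Y_s=(u(s,\xi_s)-\phi(s,\xi_s))^++\frac{\alpha(s-t)}{2(T-t)}$ and $Z_s=\esssup_{\bar\tau\in\cT^s}E_{\sF_s}[Y_{\bar\tau}]$, noting that $(Y_s)$ has continuous trajectories under the integrability assumptions. Because $\phi(T,\cdot)\geq G(\cdot)$ and $u(T,\cdot)\leq G(\cdot)$ a.s., we have $Y_T=\alpha/2<\alpha=Y_t$, so defining $\tau=\inf\{s\geq t:Y_s=Z_s\}$ yields $\bP(\tau<T)>0$ and, since $Z_\tau\geq E_{\sF_\tau}[Y_T]=\alpha/2$, also $\bP((u(\tau,\xi_\tau)-\phi(\tau,\xi_\tau))^+>0)>0$. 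Setting $\hat\tau=\inf\{s\geq\tau:(u(s,\xi_s)-\phi(s,\xi_s))^+\leq 0\}$ and $\Omega_\tau=\{\tau<\hat\tau\}\in\sF_\tau$ gives $\bP(\Omega_\tau)>0$.

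Then I construct the auxiliary test function $\tilde\phi(s,x):=\phi(s,x)-\frac{\alpha(s-t)}{2(T-t)}+E_{\sF_s}Y_\tau$. Since $\phi\in\mathscr C_{\sF}^2$ and the last term is a bounded martingale (independent of $x$), we have $\tilde\phi\in\mathscr C_{\sF}^2$ with $\mathfrak d_s\tilde\phi=\mathfrak d_s\phi-\frac{\alpha}{2(T-t)}$ and all spatial derivatives of $\tilde\phi$ coinciding with those of $\phi$. The optimal-stopping inequality $Y_\tau=Z_\tau\geq E_{\sF_\tau}[Y_{\bar\tau\wedge\hat\tau}]$ together with $(\tilde\phi-u)(\tau,\xi_\tau)1_{\Omega_\tau}=0$ shows, exactly as in the proof of Theorem \ref{thm-max}, that $\tilde\phi\in\underline{\cG}u(\tau,\xi_\tau;\Omega_\tau)$ (with $\delta=\infty$ effectively absorbed because $u-\phi$ vanishes outside a bounded region by the $C_0$ hypothesis).

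Finally, applying the viscosity subsolution property of $u$ at $(\tau,\xi_\tau;\Omega_\tau)$ with test function $\tilde\phi$, and using the assumed essential-liminf inequality for $\phi$, I get on a positive-probability subset of $\Omega_\tau$
\begin{align*}
0&\geq\text{ess}\liminf_{(s,x)\to(\tau^+,\xi_\tau)}E_{\sF_\tau}\left\{-\mathfrak d_s\tilde\phi-\bH(s,x,D\tilde\phi,D^2\tilde\phi,D\mathfrak d_\omega\tilde\phi)\right\}\\
&=\frac{\alpha}{2(T-t)}+\text{ess}\liminf_{(s,x)\to(\tau^+,\xi_\tau)}E_{\sF_\tau}\left\{-\mathfrak d_s\phi-\bH(s,x,D\phi,D^2\phi,D\mathfrak d_\omega\phi)\right\}\\
&\geq\frac{\kappa}{2(T-t)}>0,
\end{align*}
which is the desired contradiction. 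The main delicacy is checking that the construction of $Y_s$, $\tau$, and $\hat\tau$ still behaves well only under $(u-\phi)^+\in\cS^2(C_0(\bR^d))$ (rather than the separate $\cS^2(C_0)$-control on $u^+$ and $J$ used in Theorem \ref{thm-max}), but the continuity and decay at infinity needed for the measurable selection of $\xi_s$ and for the continuity of $s\mapsto Y_s$ require only the difference to be in $\cS^2(C_0(\bR^d))$, which is exactly what is assumed. The supersolution variant follows by the symmetric argument, reversing inequalities and replacing $\underline\cG$ by $\overline\cG$, $\liminf$ by $\limsup$, and the positive part by the negative part throughout.
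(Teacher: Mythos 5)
Your proposal is correct and is essentially the proof the paper intends: the paper explicitly omits the argument for Corollary \ref{cor-cmp}, noting that the proof of Theorem \ref{thm-max} uses only the viscosity subsolution property of $u$, the essential-liminf inequality for the comparison function, and $(u-\phi)^+\in\cS^2(C_0(\bR^d))$, and your write-up reproduces that Snell-envelope/optimal-stopping construction verbatim with $\phi$ in place of $J(\cdot,\cdot;\theta)$. Your observation that the measurable selection of $\xi_s$ and the continuity of $s\mapsto Y_s$ need only the difference $(u-\phi)^+$ to lie in $\cS^2(C_0(\bR^d))$ is exactly the point the paper relies on; the only minor caveat is that the supersolution case should be handled by the symmetric repetition you describe (reversing signs throughout) rather than by literally substituting $-u,-\phi$, since $\bH$ is an infimum and not odd.
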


\subsection{Uniqueness of the viscosity solution for the superparabolic case}
We shall study the superparabolic cases. Rewrite the Wiener process $W=(\tilde{W},\bar{W})$ with $\tilde{W}$ and $\bar{W}$ being two mutually independent and respectively, $m_0$ and $m_1$($=m-m_0$) dimensional  Wiener processes. Here and in the following, we adopt the decomposition $\sigma=(\tilde{\sigma},\bar{\sigma})$ with $\tilde{\sigma}$ and $\bar{\sigma}$ valued in $\bR^{d\times m_0}$ and $\bR^{d\times m_1}$ respectively for the controlled diffusion coefficient $\sigma$, and associated with $(\tilde{W}, \bar{W})$. 
Denote by $\{\tilde{\sF}_t\}_{t\geq0}$ the natural filtration generated by $\tilde{W}$ and augmented by all the
$\bP$-null sets. 

\bigskip\medskip
   $({\mathcal A} 3)$ (i) \it For each $(t,x,{v})\in [0,T]\times\bR^d\times U$,  $G(x)$ is $\tilde{\sF}_T$-measurable and for the  random variables $h=\beta^i(t,x,v),f(t,x,v)$, $i=1,\cdot, d$,
$$
  h:~\Omega\rightarrow \bR
\text{ is } \tilde{\sF}_t \text{-measurable.}
$$
 (ii) The \textit{diffusion} coefficient $\sigma=(\tilde \sigma,\bar\sigma):~[0,T]\times\bR^d\times U\rightarrow\bR^{d\times m}$ is continuous and  does not depend on $\omega$, and there exists $\lambda\in(0,\infty)$  such that
   \begin{align*}
     \text{(Superparabolicity)}\quad \quad \sum_{i,j=1}^d\sum_{k=1}^{m_1} \bar\sigma^{ik}\bar\sigma^{jk}(t,x,v)\xi^i\xi^j\geq \lambda |\xi|^2\quad \,\,\forall\, (t,x,v,\xi)\in [0,T]\times\bR^d \times U\times\bR^d.
   \end{align*}
(iii) $G$ and $f$ are nonnegative random functions.\\[4pt]

\begin{rmk}\label{rmk-A3}
As stated in \cite{Qiu-2014-Hormander}, the adaptedness to some subfiltration like $(\tilde\sF_t)_{t\geq 0}$ in (i) of $(\cA 3)$ is necessary to have the superparabolicity in (ii) of $(\cA 3)$.  As for the randomness, the diffusion coefficient $\sigma$ is assumed to be a deterministic function of time, space and control. This is basically because we have only the boundedness of $DV$ which allows the randomness of $\beta$; in other words, if we have sufficient estimate of $D^2V$, then $\sigma$ can be a random variable like $\beta$ (see the arguments for estimate \eqref{est-A4} below). In view of the follwing proof of Theorem \ref{thm-uniqueness},  the assumptions on $\sigma$ can be indeed relaxed to be of the forms $\sigma(\tilde W_{t_1\wedge t},\dots,\tilde W_{t_N\wedge t},t,x,v)$ for some $N\in\bN^+$ (like $\beta^N$ in Lemma \ref{lem-approx} below), but it can not bear the same randomness as $\beta$ in this paper. Finally, assumption (iii) of $(\cA 3)$ indicates that the value function $V$ is nonnegative and this together with $V^+\in\cS^{\infty}(C_0(\bR^d))$ (see Remark \ref{rmk-lem-dBSPDE}) implies $V\in \cS^{\infty}(C_0(\bR^d))$. We note that the nonnegativity of $G$ and $f$ can be replaced equivalently by being bounded from below by some functions in $\cS^{\infty}(C_0(\bR^d))$.
\end{rmk}

\begin{lem}\label{lem-approx}
 Let $(\cA1)-(\cA3)$ hold. For each $\eps>0$, there exist partition $0=t_0<t_1<\cdot<t_{N-1}<t_N=T$ for some $N>3$ and functions 
$$(G^N,f^N,\beta^N)\in C^{3}(\bR^{m_0\times N}\times\bR^d)\times C( U;C^3([0,T]\times\bR^{m_0\times N}\times\bR^d)) \times C(U;C^3([0,T]\times\bR^{m_0\times N}\times\bR^d))$$  
such that 
\begin{align*}
&
	G^{\eps}:=\esssup_{x\in\bR^d} \left|G^N(\tilde W_{t_1},\cdots,\tilde W_{t_N},x)-G(x) \right|,
\\
& 
	f^{\eps}_t :=\esssup_{(x,v)\in\bR^d\times U}
		\left|f^N(\tilde W_{t_1\wedge t},\cdots,\tilde W_{t_N\wedge t},t,x,v)-f(t,x,v)\right|,\quad \text{for } t\in  [0,T],
\\
&
 	\beta^{\eps}_t :=\esssup_{(x,v)\in\bR^d\times U}
		\left|\beta^N(\tilde W_{t_1\wedge t},\cdots,\tilde W_{t_N\wedge t},t,x,v)-\beta(t,x,v)\right|,\quad \text{for } t\in  [0,T],
\end{align*}
 are $\tilde\sF_t$-adapted with
\begin{align*}
	\left\| G^{\eps}  \right\|_{L^2(\Omega,\tilde\sF_T;\bR)} + \left\| f^{\eps}  \right\|_{\cL^2(\bR)}   + \left\| \beta^{\eps}  \right\|_{\cL^2(\bR^d)}    <\eps,
\end{align*}
and $G^N$, $f^N$ and $\beta^N$ are uniformly Lipschitz-continuous in the space variable $x$ with an identical Lipschitz-constant $L_c$ independent of $N$ and $\eps$.
\end{lem}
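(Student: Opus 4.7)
The approximation is built in three layers, each chosen to preserve the $L$-Lipschitz constant in $x$, so that the universal constant $L_c = L$ emerges at the end. Stage (a) discretizes time into partition intervals; stage (b) projects onto the cylindrical $\sigma$-algebra $\sigma(\tilde W_{t_1},\ldots,\tilde W_{t_N})$ via conditional expectation; stage (c) mollifies in the Wiener coordinates and in time to produce $C^3$-regularity.

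\textbf{Step 1 (time averaging).} Fix a uniform partition $0=t_0<\cdots<t_N=T$ with mesh $\delta=T/N$. For $h\in\{f,\beta\}$, set $\bar h_k(x,v):=\delta^{-1}\int_{t_{k-1}}^{t_k} h(s,x,v)\,ds$, which by $(\cA 3)$(i) is $\tilde\sF_{t_k}$-measurable, is $L$-Lipschitz in $x$, and has $|\bar h_k|\le L$. Put $\bar h(t,x,v):=\sum_k \bar h_k(x,v)\,1_{[t_{k-1},t_k)}(t)$. The $\cL^2$-continuity of time translation, combined with the uniform continuity and boundedness granted by $(\cA 1)$(ii)--(iii), yields
\[
\left\|\,\esssup_{(x,v)}|\bar h(t,x,v)-h(t,x,v)|\,\right\|_{\cL^2}\longrightarrow 0\quad\text{as}\quad\delta\to 0.
\]

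\textbf{Step 2 (cylindrification).} Let $\tilde\sF^N_k:=\sigma(\tilde W_{t_1},\ldots,\tilde W_{t_k})$ and define $\hat h_k(x,v):=E[\bar h_k(x,v)\mid\tilde\sF^N_k]$. Along a sequence of dyadic refinements of $\pi^N$ the $\sigma$-algebras $\tilde\sF^N_k$ increase to $\tilde\sF_{t_k}$, so martingale convergence gives $\hat h_k\to\bar h_k$ in $L^2(\Omega)$. By Doob--Dynkin, $\hat h_k(x,v)=h_k^*(\tilde W_{t_1},\ldots,\tilde W_{t_k},x,v)$ for a Borel function $h_k^*$ which, since conditional expectation is an $L^\infty$-contraction, remains $L$-Lipschitz in $x$ and pointwise bounded by $L$. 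Apply the identical procedure to $G$ (with $k=N$, since $G$ is time-independent) to obtain $G^*$.

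\textbf{Step 3 (smoothing).} Convolve $h_k^*$ with a smooth compactly-supported mollifier $\rho_\eta$ in the Wiener coordinates in $\bR^{m_0\times k}$, producing $h_k^{**}$ that is $C^\infty$ in $w$ with the Lipschitz constant in $x$ and the sup bound unchanged. Extend $h_k^{**}$ trivially in the variables $w_j$ for $j>k$, and replace the indicators $1_{[t_{k-1},t_k)}(t)$ by a smooth $C^3$ partition of unity $\{\chi^N_k\}$ on $[0,T]$. Setting
\[
h^N(t,w_1,\ldots,w_N,x,v) := \sum_{k=1}^N h_k^{**}(w_1,\ldots,w_k,x,v)\,\chi^N_k(t),
\]
the trivial dependence on $w_j$ ($j>k$) on $\mathrm{supp}\,\chi^N_k\subset[t_{k-1},t_k)$ means that evaluation at $w_j=\tilde W_{t_j\wedge t}$ produces an $\tilde\sF_t$-adapted coefficient, as required. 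Under the Gaussian law of the increments, the mollification converges in $L^2(\Omega)$ as $\eta\to 0$ by continuity of translations; an analogous smoothing is done for $G^*$. Choosing $N$ large and $\eta$ small concludes the construction.

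\textbf{Main obstacle.} The principal difficulty is upgrading the fixed-$(x,v)$ $L^2(\Omega)$-convergence of Steps 2--3 to convergence of $\esssup_{(x,v)}|h^N-h|$ in $\cL^2$. The leverage is that every approximant is $L$-Lipschitz in $x$ uniformly in $(\omega,t,v)$ and $U$ is compact: on $B_R(0)\times U$, an $\eps'$-net argument reduces the $\esssup$ to finitely many pointwise estimates whose count is independent of $\omega$, so one gets the desired $\cL^2$-control on $B_R\times U$. Extending from $B_R$ to $\bR^d$ uses the preserved global $L$-Lipschitz and global sup bound, together with a judicious cutoff of the conditional-expectation output in the Wiener-variables (not in $x$) to localize the deviation; sending $R\to\infty$ simultaneously with $N\to\infty$ and $\eta\to 0$ delivers the claimed estimate. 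The uniformity of the Lipschitz constant $L_c=L$ in $N,\eps$ is exactly the structural feature that makes this last refinement possible.
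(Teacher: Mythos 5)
Your three-stage construction has the right general shape, but as written it has three concrete gaps. First, the approximants you produce are not in the required class: the lemma asks for $C^{3}$ regularity jointly in the Wiener coordinates \emph{and} in $x$ (recall the paper's convention that $C^k$ means bounded continuous derivatives up to order $k$), whereas your $h_k^{**}$ is mollified only in $w$ and remains merely Lipschitz in $x$. This is easily repaired by mollifying in $x$ as well (which preserves the constant $L$ and perturbs the sup norm by at most $L\eta$), but it must be done. Second, there is a mismatch between what your martingale-convergence step controls and what the lemma evaluates. You condition on $\sigma(\tilde W_{t_1},\dots,\tilde W_{t_k})$, so Step 2 makes $h_k^{*}(\tilde W_{t_1},\dots,\tilde W_{t_k},x,v)$ close to $\bar h_k(x,v)$; but on $\mathrm{supp}\,\chi^N_k\subset[t_{k-1},t_k)$ the lemma substitutes $\tilde W_{t_k\wedge t}=\tilde W_t$, not $\tilde W_{t_k}$, into the last slot. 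The resulting discrepancy is of order $\|\partial_{w_k}h_k^{**}\|_\infty\,|\tilde W_t-\tilde W_{t_k}|\lesssim (L/\eta)\sqrt{T/N}$, and since $\eta$ must be chosen small \emph{after} the partition is fixed (the admissible mollification scale depends on the Borel function $h_k^*$, which depends on the partition), the two limits work against each other. The clean fix is to condition on the left endpoint $\sigma(\tilde W_{t_1},\dots,\tilde W_{t_{k-1}})$: then every Wiener argument is frozen on $[t_{k-1},t_k)$, the mismatch disappears, and $E[h(t,\cdot,\cdot)\mid\sigma(\tilde W_{t_1},\dots,\tilde W_{t_{k-1}})]\to h(t,\cdot,\cdot)$ still holds as the mesh shrinks.

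Third, and most seriously, the passage from $\esssup_{x\in B_R}$ to $\esssup_{x\in\bR^d}$ does not close. A finite net plus the uniform Lipschitz bound controls the supremum on a compact set only; a sequence of $L$-Lipschitz, $L$-bounded functions converging pointwise need not converge uniformly on $\bR^d$, and the ``global sup bound'' only bounds the tail deviation by $2L$, which is not small. The paper closes this by a different route that your proof never touches: $(\cA 2)$ places $G$ and $f(\cdot,\cdot,v)$ in $L^{2}(\Omega,\sF_T;H^{q,2})$ and $\cL^{2}(H^{q,2})$ with $q>2+\tfrac{d}{2}$; cylinder functions of $(\tilde W_{t_1},\dots,\tilde W_{t_N})$ are dense in these separable Hilbert-space-valued $L^2$ spaces by the standard argument, and the Sobolev embedding $H^{q,2}\hookrightarrow C_b(\bR^d)$ converts the $H^{q,2}$-error directly into the global $\esssup_{x}$ error the lemma requires. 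Your construction forgoes exactly the hypothesis that makes the supremum over all of $\bR^d$ tractable, and the proposed ``judicious cutoff in the Wiener variables'' for $\beta$ is not an argument: at a minimum one must explain why the essential range of $\omega\mapsto\beta(t,\cdot,\cdot)$ in the sup norm is separable. A related smaller point: the net argument over $v$ requires a modulus of continuity uniform in $(\omega,t)$, which $(\cA 1)$(ii) does not literally supply, so the uniformity in $v$ should also be routed through the regularity granted by $(\cA 2)$ and the compactness of $U$.
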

Notice that the set of functions like $G^N(\tilde W_{t_1},\cdots,\tilde W_{t_N},x)$ (resp. $f^N(\tilde W_{t_1\wedge t},\cdots,\tilde W_{t_N\wedge t},t,x,v)$ and $\left(\beta^N\right)^i(\tilde W_{t_1\wedge t},\cdots,\tilde W_{t_N\wedge t},t,x,v)$) are dense in $L^{2}(\Omega,\sF_T;H^{q,2})$ (resp. $\cL^{2}(H^{q,2}) $ and $\cL^{\infty}(H^{q,\infty})$ ) for $q>2+\frac{d}{2}$, $i=1,\dots,\,d$ for each fixed $v$.   The proof of Lemma \ref{lem-approx} is an application of standard density arguments and it is omitted.

We are now ready to present the uniqueness result for the superparabolic cases.

\begin{thm}\label{thm-uniqueness}
Let assumptions $(\cA1)-(\cA3)$ hold. The value function $V$ defined by \eqref{eq-value-func} is the unique viscosity solution in $\cS^{\infty}(C_0(\bR^d))$ to BSPDE \eqref{SHJB}. Moreover, $V(t,x)$ is $\tilde{\sF}_t$-measurable for each $(t,x)\in[0,T]\times\bR^d$.
\end{thm}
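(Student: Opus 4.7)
The plan is to complement Theorem \ref{thm-max} by showing that every viscosity solution $u\in\cS^{\infty}(C_0(\bR^d))$ of \eqref{SHJB} also satisfies $u\geq V$, so that together with $u\leq V$ we obtain $u=V$. The superparabolicity of $\bar\sigma$ and the deterministic nature of $\sigma$ are what enable the reverse comparison by providing sufficient regularity for the natural lower barrier $V$ after a finite-dimensional approximation.

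Step one is to use Lemma \ref{lem-approx} to replace the random coefficients by smooth finite-dimensional surrogates $(G^N,f^N,\beta^N)$ whose $\omega$-dependence factors through the Wiener marginals $(\tilde W_{t_j\wedge t})_{j=1}^N$, and to denote by $V^N$ the value function of the resulting approximate control problem with Hamiltonian $\bH^N$. Since $\sigma$ is deterministic and all coefficient randomness becomes Markovian in the enlarged state $(\tilde W_{t_1\wedge t},\dots,\tilde W_{t_N\wedge t},X_t)$, there is a deterministic function $\overline V^N$ on $[0,T]\times\bR^{m_0 N}\times\bR^d$ with $V^N(t,x)=\overline V^N(t,\tilde W_{t_1\wedge t},\dots,\tilde W_{t_N\wedge t},x)$, solving a deterministic HJB equation on the enlarged space. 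The $\tilde W$-direction adds a standard parabolic generator and the $x$-direction inherits uniform ellipticity from $\bar\sigma$, so the lifted equation is uniformly parabolic. In particular $V^N(t,\cdot)$ is $\tilde\sF_t$-measurable, and the stability estimates of Lemma \ref{lem-SDE} combined with Lemma \ref{lem-approx} yield $\|V^N-V\|_{\cS^2(C_0(\bR^d))}\to 0$, whence $V$ inherits the $\tilde\sF_t$-measurability claimed in the theorem.

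Step two upgrades $V^N$ to a test function. Regularity theory for uniformly parabolic HJB equations (Krylov's $L^p$-estimates together with Evans--Krylov for the concave-type operator obtained from the infimum) applied to $\overline V^N$ gives $C^{1,2}$ regularity in the interior, so that $V^N\in\mathscr C^2_{\sF}$ with $\mathfrak d_t V^N$ and $\mathfrak d_\omega V^N$ read off from the It\^o decomposition of $\overline V^N(t,\tilde W_{t_1\wedge t},\dots,x)$ via Lemma \ref{lem-ito-wentzell}. Moreover $V^N$ satisfies the approximate equation \eqref{SHJB-eqiv} in the classical sense with data $(G^N,f^N,\beta^N)$. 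For any viscosity solution $u$ of the original problem, the difference between $\bH$ and $\bH^N$ evaluated at $V^N$ is bounded pointwise by $|f^N-f|+(1+|DV^N|)|\beta^N-\beta|$, which is $\tilde\sF_t$-adapted, uniformly $L_c$-Lipschitz, and tends to zero in $\cL^2$ by Lemma \ref{lem-approx}.

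Step three adds a small penalization to absorb this discrepancy. Set $\phi_N=V^N+\kappa(T-t)+\eta_N$, where $\kappa>0$ and $\eta_N\in\mathscr C^2_{\sF}$ solves a linear backward SDE whose generator dominates the perturbation identified in step two, with terminal value large enough to ensure $\phi_N(T,\cdot)\geq G$. Then $\phi_N$ satisfies the strict supersolution inequality required by Corollary \ref{cor-cmp}, which delivers $u\geq\phi_N$. Passing first $N\to\infty$ and then $\kappa\to 0$ yields $u\geq V$, completing the uniqueness argument. The main obstacle is step two: producing genuine $C^{1,2}$ regularity for $\overline V^N$ despite the merely Lipschitz dependence of $\bH^N$ on its arguments. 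This is where Evans--Krylov-type estimates for concave uniformly parabolic HJB equations on the enlarged domain are essential; if they are not directly applicable one must further smooth $\bH^N$ (e.g., by inf-convolution over a finite $\eps$-net of the compact set $U$ and mollification) and track the additional error jointly with that of Lemma \ref{lem-approx}. Once the smoothness of $V^N$ is in hand, the remaining comparison is a routine application of Corollary \ref{cor-cmp}.
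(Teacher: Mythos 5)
Your overall strategy is the paper's: approximate $(G,f,\beta)$ by the finite-dimensional surrogates of Lemma \ref{lem-approx}, Markovianize on the enlarged state $(\tilde W_{t_1\wedge t},\dots,\tilde W_{t_N\wedge t},X_t)$, invoke regularity theory for the resulting uniformly parabolic Bellman equation (the joint diffusion matrix is nondegenerate precisely because its Schur complement is $\bar\sigma\bar\sigma'\geq\lambda I$) to place the approximate value function in $\mathscr C^2_{\sF}$, correct it by a backward SDE whose driver $f^\eps+\tilde L\beta^\eps$ dominates the coefficient error (calibrated by the uniform gradient bound $\tilde L$ on $V^\eps$), and close with Corollary \ref{cor-cmp}. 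Relying on Theorem \ref{thm-max} for $u\leq V$ and only proving $u\geq V$ is a legitimate economy over the paper, which constructs both envelopes $\overline u$ and $\underline u$.

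The genuine gap is in Step three: the barrier you build points the wrong way. You set $\phi_N=V^N+\kappa(T-t)+\eta_N$ with $\phi_N(T,\cdot)\geq G$ and the \emph{supersolution} inequality $\operatorname{ess\,lim\,inf}E_{\sF_t}[\cdots]\geq 0$; Corollary \ref{cor-cmp} applied to such a $\phi_N$ and a viscosity \emph{subsolution} $u$ yields $u\leq\phi_N$, not $u\geq\phi_N$. Passing to the limit you would only recover $u\leq V$, which you already have from Theorem \ref{thm-max}, and the claimed inequality $u\geq V$ is never established. To get the lower bound you must construct the \emph{subsolution}-type barrier $\underline V^{\eps}=V^N-\eta_N$ (as the paper does, with $\eta_N=Y^{\eps}\geq 0$ solving the BSDE with terminal value $G^{\eps}$ and driver $f^{\eps}+\tilde L\beta^{\eps}$, so that $\underline V^{\eps}(T,\cdot)=G^N-G^{\eps}\leq G$ and $\operatorname{ess\,lim\,sup}E_{\sF_t}[\cdots]\leq 0$), and then apply the supersolution half of Corollary \ref{cor-cmp} to conclude $u\geq\underline V^{\eps}$, whence $u\geq V$ as $\eps\to 0$. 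Note also that the perturbation by $\eta_N$ leaves the Hamiltonian untouched only because $\eta_N$ is independent of $x$, so $D\mathfrak d_{\omega}\eta_N=0$; this is worth stating. The fix is symmetric and mechanical, but as written the argument does not prove the theorem.
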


For the proof of the uniqueness, we will adopt  a strategy  inspired by a variation of Perron's method (see \cite{ekren2016viscosity-1}). Indeed, even though Corollary \ref{cor-cmp} is not a (partial) comparison principle like in \cite{ekren2016viscosity-1}, it is sufficient for us to proceed with analogous schemes for the proof. 

Define 
\begin{align*}
\overline{\mathscr V}=
\bigg\{
	\phi\in\mathscr C^2_{\sF}:&\, \phi^-\in \cS^{\infty}(C_0(\bR^d)),\,\phi(T,x)\geq G(x)\,\,\,\forall x\in\bR^d, \text{ a.s., and with probability 1,}\\
&
	\text{ess}\liminf_{(s,x)\rightarrow (t^+,y)}	E_{\sF_t}\left[ -\mathfrak{d}_{s}\phi(s,x)-\bH(s,x,D\phi(s,x),D^2\phi(s,x),D\mathfrak{d}_{\omega}\phi (s,x)) \right]
	\geq 0\\
&
	\quad \forall (t,y)\in[0,T)\times\bR^d
	\bigg\}\\
\underline{\mathscr V}=
\bigg\{
	\phi\in\mathscr C^2_{\sF}:&\, \phi^+\in \cS^{\infty}(C_0(\bR^d)),\,\phi(T,x)\leq G(x)\,\,\,\forall x\in\bR^d, \text{ a.s., and with probability 1,}\\
&
		\text{ess}\limsup_{(s,x)\rightarrow (t^+,y)}  E_{\sF_t}\left[ -\mathfrak{d}_{s}\phi(s,x)-\bH(s,x,D\phi(s,x),D^2\phi(s,x),D\mathfrak{d}_{\omega}\phi (s,x)) \right] 
		\leq 0\\
&
		\quad \forall (t,y)\in[0,T)\times\bR^d
	\bigg\},
\end{align*}
and set
\begin{align*}
\overline{u}=\essinf_{\phi\in \overline{\mathscr V}} \phi, \quad 
\underline{u}=\esssup_{\phi\in \underline{\mathscr V}} \phi.
\end{align*}
In view of Corollary \ref{cor-cmp}, for any viscosity solution $u\in \cS^{\infty}(C_0(\bR^d))$ we have $\underline u\leq u\leq \overline u$.  Therefore,  for the uniqueness of viscosity solution, it is sufficient to check $\underline u=V= \overline u$.

\begin{proof}[Proof of Theorem \ref{thm-uniqueness}]
By assumption (iii) of $(\cA 3)$, the value function $V$ is nonnegative and this together with $V^+\in\cS^{\infty}(C_0(\bR^d))$ (see Remark \ref{rmk-lem-dBSPDE}) implies $V\in \cS^{\infty}(C_0(\bR^d))$. Thus, Corollary \ref{cor-cmp} yields $\underline u\leq V\leq \overline u$.

For each fixed $\eps\in(0,1)$, choose $(G^{\eps},\,f^{\eps},\,\beta^{\eps})$ and $(G^N,f^N,\beta^N)$ as in Lemma \ref{lem-approx}. Recalling the standard theory of backward SDEs, let the $\tilde \sF_t$-adapted and predictable pair  $(Y^{\eps},Z^{\eps})\in \cS^2(\bR)\times \cL^2(\bR^{m_0})$  be the solution  of backward SDE
$$
Y_s^{\eps}=G^{\eps}+
	\int_s^T\left(f^{\eps}_t+K\beta^{\eps}_t\right)\,dt
		-\int_s^TZ^{\eps}_s\,d\tilde W_s,
$$
and for each $(s,x)\in[0,T)\times\bR^d$,  set
{\small
\begin{align*}
{V}^{\eps}(s,x)
&=\essinf_{\theta\in\cU} E_{\sF_s} \left[
	\int_s^Tf^N\left(\tilde W_{t_1\wedge t},\cdots,\tilde W_{t_N\wedge t},t,X^{s,x;\theta,N}_t,\theta_t\right)\,dt
		+G^N\left(\tilde W_{t_1},\cdots,\tilde W_{t_N},X^{s,x;\theta,N}_T\right)
		\right],
\\
\overline{V}^{\eps}(s,x)
&=
	V^{\eps}(s,x)+Y^{\eps}_s,\\
\underline{V}^{\eps}(s,x)
&=
	V^{\eps}(s,x)-Y^{\eps}_s,
\end{align*}
}
where the constant $K\geq 0$ is to be determined later and $X^{s,x;\theta,N}_t$ satisfies SDE
\begin{equation*}
\left\{
\begin{split}
&dX_t=\beta^N(t,X_t,\theta_t)dt   +\sigma(t,X_t,\theta_t)\,dW_t,\,\,
\,t\in[s,T]; \\
& X_s=x.
\end{split}
\right.
\end{equation*}

We have $V^{\eps},\overline{V}^{\eps},\underline{V}^{\eps} \in \mathscr C^2_{\sF}$ which can be derived backwardly.  By the viscosity solution theory of fully nonlinear parabolic PDEs (see \cite{crandall2000lp,krylov-1987,lions-1983,Peng-DPP-1997,wang1992-II}), when $s\in[t_{N-1},T)$, $V^{\eps}(s,x)=\tilde V^{\eps}(s,x, \tilde W_{t_1},\cdots,\tilde W_{t_{N-1}},\tilde{W}_s)$ with
{\small
\begin{align*}
&\tilde{V}^{\eps}(s,x, \tilde W_{t_1},\cdots,\tilde W_{t_{N-1}},y)\\
&=\essinf_{\theta\in\cU} E_{\sF_s, \tilde W_s=y} \left[
	\int_s^Tf^N\left( \tilde W_{t_1},\cdots,\tilde W_{t_{N-1}},\tilde W_{t_N\wedge t},t,X^{s,x;\theta,N}_t,\theta_t\right)\,dt
		+G^N\left(\tilde W_{t_1},\cdots,\tilde W_{t_N},X^{s,x;\theta,N}_T\right)
		\right]
\end{align*}
}
satisfying the superparabolic HJB equation of the following form
{\small
\begin{equation}\label{HJB-N}
  \left\{\begin{array}{l}
  \begin{split}
  -D_tu(t,x,y)=\,& 
  \essinf_{v\in U} \bigg\{   \frac{1}{2} \text{tr}\left(D^2_{yy}u(t,x,y)\right) +
	  \text{tr}\Big(  \frac{1}{2}\sigma \sigma'( \tilde W_{t_1},\cdots,\tilde W_{t_{N-1}},y,t,x,v) D^2_{xx}u(t,x,y)
	  \\
	  &+\tilde  \sigma  ( \tilde W_{t_1},\cdots,\tilde W_{t_{N-1}},y,t,x,v)D^2_{xy}u(t,x,y)
	  \Big)\\
	&
		  +D_xu(t,x,y)\beta^N( \tilde W_{t_1},\cdots,\tilde W_{t_{N-1}},y,t,x,v)\\
	  &+f^N(\tilde W_{t_1},\cdots,\tilde W_{t_{N-1}},y,t,x,v)\bigg\},
                     \quad (t,x,y)\in [t_{N-1},T)\times\bR^d\times\bR^{m_0};\\
    u(T,x,y)=\, &G^N( \tilde W_{t_1},\cdots,\tilde W_{t_{N-1}},y,x), \quad (x,y)\in\bR^d\times\bR^{m_0},
    \end{split}
  \end{array}\right.
\end{equation}
}
and thus the regularity theory of viscosity solutions gives 
$$\tilde{V}^{\eps}(\cdot,\cdot, \tilde W_{t_1},\cdots,\tilde W_{t_{N-1}},\cdot)\in 
L^{\infty}\left(\Omega,\tilde\sF_{t_{N-1}}; C^{1+\frac{\bar\alpha}{2},2+\bar\alpha}([t_{N-1},T)\times\bR^d) \cap  C([t_{N-1},T]\times\bR^d)\right) 
,
$$
for some $\bar\alpha \in (0,1)$, where the \textit{time-space} H\"older space $C^{1+\frac{\bar\alpha}{2},2+\bar\alpha}([0,T)\times\bR^d)$ is defined as usual. We can make similar arguments on time interval $[t_{N-2},t_{N-1})$ taking the obtained $V^{\eps}(t_{N-1},x)$ as the terminal value, and recursively on intervals $[t_{N-3},t_{N-2})$, $\dots$, $[0,t_{1})$.
Furthermore, applying the  It\^o-Kunita formula to $\tilde{V}^{\eps}(s,x, \tilde W_{t_1},\cdots,\tilde W_{t_{N-1}},y)$ on $[t_{N-1},T]$ yields that
 {\small
 \begin{equation}\label{SHJB-N}
  \left\{\begin{array}{l}
  \begin{split}
  -dV^{\eps}(t,x)=\,& 
  \essinf_{v\in U} \bigg\{
	  \text{tr}\Big(  \frac{1}{2}\sigma \sigma'( \tilde W_{t_1},\cdots,\tilde W_{t_{N-1}},\tilde W_t,t,x,v) D_{xx}^2V^{\eps}(t,x)
	  \\
	  &+\tilde  \sigma  ( \tilde W_{t_1},\cdots,\tilde W_{t_{N-1}},\tilde W_t,t,x,v)D_x D_{y}\tilde{V}^{\eps}(t,x, \tilde W_{t_1},\cdots,\tilde W_{t_{N-1}},\tilde W_t)
	  \Big)\\
  &
  		  +D_xV^{\eps}(t,x)  \beta^N( \tilde W_{t_1},\cdots,\tilde W_{t_{N-1}},\tilde W_t,t,x,v)
	  +f^N(\tilde W_{t_1},\cdots,\tilde W_{t_{N-1}},\tilde W_t,t,x,v)\bigg\}\,dt\\
&
		-D_{y}\tilde{V}^{\eps}(t,x, \tilde W_{t_1},\cdots,\tilde W_{t_{N-1}},\tilde W_t) \,d\tilde W_t,
                     \quad (t,x)\in [t_{N-1},T)\times\bR^d;\\
    V^{\eps}(T,x)=\, &G^N( \tilde W_{t_1},\cdots,\tilde W_{t_{N-1}},\tilde W_T,x), \quad x\in\bR^d.
    \end{split}
  \end{array}\right.
\end{equation}
}
It follows similarly on intervals $[t_{N-2},t_{N-1})$, $\dots$, $[0,t_{1})$, and finally we have $V^{\eps},\overline{V}^{\eps},\underline{V}^{\eps} \in \mathscr C^2_{\sF}$. In particular, $\mathfrak{d}_{\omega} V^{\eps}$ is also constructed recursively, for instance, on $[t_{N-1},T)$, for $i=1,\dots,m_0$,
\begin{align*}
	\left(\mathfrak{d}_{\omega} V^{\eps}\right)^i (t,x)
&=
	\left(D_{y}\tilde{V}^{\eps}\right)^i(t,x, \tilde W_{t_1},\cdots,\tilde W_{t_{N-1}},\tilde W_t),\quad  (t,x)\in [t_{N-1},T)\times \bR^d,
\\
	\left(\mathfrak{d}_{\omega} \overline V^{\eps} \right)^i(t,x)
&
	=
	\left(D_{y}\tilde{V}^{\eps}\right)^i(t,x, \tilde W_{t_1},\cdots,\tilde W_{t_{N-1}},\tilde W_t)+Z_t^{\eps},\quad  (t,x)\in [t_{N-1},T)\times \bR^d,
\\
	\left(\mathfrak{d}_{\omega} \underline V^{\eps}\right)^i (t,x)
&
	=
	\left(D_{y}\tilde{V}^{\eps}\right)^i(t,x, \tilde W_{t_1},\cdots,\tilde W_{t_{N-1}},\tilde W_t) -Z^{\eps}_t,\quad  (t,x)\in [t_{N-1},T)\times \bR^d,
\end{align*}
and for $i=m_0+1,\dots,m$,
\begin{align*}
&
	\left(\mathfrak{d}_{\omega} V^{\eps}\right)^i (t,x)=\left(\mathfrak{d}_{\omega} \overline V^{\eps} \right)^i(t,x)=\left(\mathfrak{d}_{\omega} \underline V^{\eps}\right)^i (t,x)
=0,\quad  (t,x)\in [t_{N-1},T)\times \bR^d.
\end{align*}

In view of the approximation in Lemma \ref{lem-approx} and with an analogy to the proof of (iv) in Proposition \ref{prop-value-func}, there exists $\tilde{L}>0$ such that 
$$
\max_{(t,x)\in[0,T]\times \bR^d}\left\{ |DV^{\eps}(t,x)|  + |D\overline V^{\eps}(t,x)| + |D\underline V^{\eps}(t,x)|   \right\}
\leq \tilde L,\,\,\,\text{a.s.}
$$
with $\tilde L$ being independent of $\eps$ and $N$. Set $K=\tilde L$. Then for $\overline V^{\eps}$ on $[t_{N-1},T)$, omitting the inputs for each functions we have
\begin{align}
&-\mathfrak{d}_{t}\overline V^{\eps}-\bH(D\overline V^{\eps},D^2\overline V^{\eps},D\mathfrak{d}_{\omega}\overline V^{\eps})
\nonumber\\
&=
	-\mathfrak{d}_{t}\overline V^{\eps}
	-\essinf_{v\in U} \bigg\{
	  \text{tr}\Big(  \frac{1}{2}\sigma \sigma' D^2\overline V^{\eps}
	  +  \sigma D\mathfrak{d}_{\omega}\overline V^{\eps}
	  \Big)
  		  +   D\overline V^{\eps}\beta^N
	  +f^N+f^{\eps}+ \tilde L  \beta^{\eps}\nonumber\\
&
	  \quad\quad
	  +   D\overline V^{\eps} \left(\beta-\beta^N\right)      -\beta^{\eps} \tilde L
	  +f-f^N-f^{\eps}
	   \bigg\}
\nonumber\\
&\geq
	-\mathfrak{d}_{t}\overline V^{\eps}
	-\essinf_{v\in U} \bigg\{
	  \text{tr}\Big(  \frac{1}{2}\sigma \sigma' D^2\overline V^{\eps}
	  +  \sigma D\mathfrak{d}_{\omega}\overline V^{\eps}
	  \Big)
  		  +    D\overline V^{\eps}\beta^N
	  +f^N+f^{\eps}+\beta^{\eps} \tilde L 
	   \bigg\}
	 \label{est-A4}\\
&=0,\nonumber
\end{align}
and it follows similarly on intervals $[t_{N-2},t_{N-1})$, $\dots$, $[0,t_1)$ that
$$
-\mathfrak{d}_{t}\overline V^{\eps}-\bH(D\overline V^{\eps},D^2\overline V^{\eps},D\mathfrak{d}_{\omega}\overline V^{\eps}) \geq 0,
$$
which together with the obvious facts  $\overline V^{\eps}(T)=G^{\eps}+G^N\geq G$ and $\left(\overline V^{\eps}\right)^- \equiv 0$  indicates that $\overline V^{\eps}\in \overline {\mathscr V}$. Analogously, $\underline V^{\eps}\in \underline {\mathscr V}$.

Now let us measure the distance between $\underline V^{\eps}$, $\overline V^{\eps}$ and $V$. By the theory of backward SDEs, we first have 
\begin{align*}
\|Y^{\eps}\|_{\cS^2(\bR)} + \|Z^{\eps}\|_{\cL^2(\bR^{m_0})}
&
	\leq \bar K \left(  \|G^{\eps}\|_{L^2(\Omega,\tilde\sF_T;\bR)} + \|f^{\eps}+\tilde L \beta^{\eps}\|_{\cL^2(\bR)}  \right)
\\
&
	\leq  C\eps
\end{align*}
with the constant $C$ independent of $N$ and $\eps$.  Fix some $(s,x)\in[0,T)\times \bR^d$. In view of the approximation in Lemma \ref{lem-approx}, using It\^o's formula, Burkholder-Davis-Gundy's inequality, and Gronwall's inequality, we have through standard computations that for any $\theta\in\cU$,
\begin{align*}
&
	E_{\sF_s} \left [  \sup_{s\leq t\leq T} \left|  X^{s,x;\theta,N}_t-X^{s,x;\theta}_t   \right| ^2  \right]
\\
&\leq
	\tilde KE_{\sF_s}\int_s^T\left| \beta^N\left(\tilde W_{t_1\wedge t},\cdots,\tilde W_{t_N\wedge t},t,X^{s,x;\theta,N}_t,\theta_t\right)-\beta\left(t,X^{s,x;\theta,N}_t,\theta_t\right) \right|^2\,dt
	\\
&\leq
\tilde KE_{\sF_s}\int_s^T   \left|\beta^{\eps}_t\right|^2\,dt,
\end{align*}
with $\tilde K$ being independent of $N$, $\eps$ and $\theta$. Then
\begin{align*}
&E\left| V^{\eps}(s,x)-V(s,x)\right|
\\
&	
	\leq
		E\esssup_{\theta\in\cU}
		E_{\sF_s}\bigg[ 
		\int_s^T\Big( f^{\eps}_t +\Big| f\left(t,X^{s,x;\theta,N}_t,\theta_t\right) -f\left(t,X^{s,x;\theta}_t,\theta_t\right)\Big| \Big)\,dt
\\
&\quad\quad\quad
		+G^{\eps} +\Big| G\left(X^{s,x;\theta,N}_T\right) -G\left(X^{s,x;\theta}_T \right)\Big|
		\bigg] 
\\
&
	\leq E|Y^{\eps}_s|
		+
		2L(T^{1/2}+1)\tilde K E \esssup_{\theta\in\cU}
		\left(
		E_{\sF_s}\int_s^T   \left|\beta^{\eps}_t\right|^2\,dt
		 \right)^{1/2}
\\
&
	\leq \left\| Y^{\eps}\right\|_{\cS^2(\bR)} +2L\tilde K (T^{1/2}+1) \left\|  \beta^{\eps} \right\|_{\cL^2(\bR^d)} 
\\
&
	\leq K_0 \eps,
\end{align*}
with the constant $K_0$ being independent of $N$, $\eps$ and $(s,x)$. Furthermore, in view of the definitions of $\overline V^{\eps}$ and $\underline V^{\eps}$, there exists some constant $K_1$ independent of $\eps$ and $N$ such that
\begin{align*}
E\left| \overline V^{\eps}(s,x)-V(s,x)\right|
+E\left| \underline V^{\eps}(s,x)-V(s,x)\right|
\leq K_1 \eps, \quad \forall \, (s,x)\in [0,T]\times\bR^d.
\end{align*}
The arbitrariness of $\eps$ together with the relation $\overline V^{\eps}\geq V \geq \underline V^{\eps}$ finally implies that $\underline u=V= \overline u$. Moreover, $V(t,x)$ is $\tilde{\sF}_t$-measurable for each $(t,x)\in[0,T]\times\bR^d$.
\end{proof}

\begin{rmk}\label{rmk-superparab}
   Through the above proof we see that only  $V\in \cS^2(C_0(\bR^d))$ and the approximations in Lemma \ref{lem-approx} are expected from $(\cA 2)$. Thus,  the assumption $(\cA2)$ may be relaxed.  Besides, in the above proof we in fact construct the regular approximations of $V$ and this along with  $\tilde{\sF}_t$-adaptedness of $V(t,x)$
 for each $(t,x)\in[0,T]\times\bR^d$ indicates that under assumptions $(\cA1)-(\cA3)$, the stochastic HJB equation can be equivalently written:
 \begin{equation}\label{SHJB-super}
  \left\{\begin{array}{l}
  \begin{split}
  -du(t,x)=\,& 
  \essinf_{v\in U} \bigg\{
	  \text{tr}\Big(  \frac{1}{2}\sigma \sigma'(t,x,v) D^2u(t,x)
	  +\tilde  \sigma  (t,x,v)D \psi(t,x)
	  \Big)
  		  \\
&
	\quad\quad\quad
		  +   Du(t,x) \beta(t,x,v) +f( t,x,v)\bigg\}\,dt-\psi(t,x) \,d\tilde W_t,
		\\
&
\quad\quad\quad		
                     \quad (t,x)\in [0,T)\times\bR^d;\\
 u(T,x)=\, &G( x), \quad x\in\bR^d.
    \end{split}
  \end{array}\right.
\end{equation}
\end{rmk}

%




\begin{appendix}
\section{Measurable selection theorem}
The following measurable selection theorem is referred to \cite{wagner1977survey}.
\begin{thm}\label{thm-MS}
Let $(\Lambda,\mathscr M)$ be a measurable space equipped with a nonnegative measure $\mu$ and let $(\cO,\cB(\cO))$ be a polish space. Suppose $F$ is a set-valued function from $\Lambda$ to $\cB(\cO)$ satisfying: (i) for $\mu$-a.e. $\lambda \in\Lambda$, $F(\lambda)$ is a closed nonempty subset of $\cO$; (ii) for any open set $O\subset \cO$, $\{\lambda:\, F(\lambda)\cap O\neq \emptyset\}\in \mathscr M$. Then there exists a  measurable function $f$: $(\Lambda,\mathscr M)\rightarrow (\cO,\cB(\cO))$ such that for $\mu$-a.e. $\lambda\in\Lambda$, $f(\lambda)\in F(\lambda)$. 
\end{thm}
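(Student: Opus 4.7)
I propose to prove the statement by the classical Kuratowski--Ryll-Nardzewski construction. The plan is to fix a complete metric $d$ compatible with the Polish topology on $\cO$ and a countable dense sequence $\{x_k\}_{k\in\bN}\subset\cO$, discard a $\mu$-null set so that $F(\lambda)$ is closed and nonempty for every $\lambda$, and then inductively build a sequence of $\mathscr M$-measurable, countably-valued functions $f_n:\Lambda\to\cO$ satisfying
$$d(f_n(\lambda),F(\lambda))<2^{-n}\quad\text{and}\quad d(f_n(\lambda),f_{n-1}(\lambda))<2^{-(n-1)}\quad\text{for every }\lambda\in\Lambda.$$
The first inequality will force $f(\lambda):=\lim_n f_n(\lambda)$ to lie in the closed set $F(\lambda)$, while the second, by Cauchy completeness of $(\cO,d)$, will ensure that the pointwise limit exists.

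For the base case I would take $C_k:=\{\lambda:F(\lambda)\cap B(x_k,1)\neq\emptyset\}$, which lies in $\mathscr M$ by hypothesis (ii) and whose union equals $\Lambda$ by the nonemptyness in (i) together with density of $\{x_k\}$. Disjointifying $B_k:=C_k\setminus\bigcup_{j<k}C_j$ and setting $f_0(\lambda):=x_k$ on $B_k$ yields a measurable $f_0$ with $d(f_0(\lambda),F(\lambda))<1$. For the inductive step, assuming $f_n$ with the two properties above is in hand, I would introduce
$$A_k^{(n+1)}:=\bigl\{\lambda:d(x_k,f_n(\lambda))<2^{-n}\bigr\}\cap\bigl\{\lambda:F(\lambda)\cap B(x_k,2^{-(n+1)})\neq\emptyset\bigr\},$$
each of which is in $\mathscr M$: the first factor by measurability of $f_n$, the second by hypothesis (ii). A short density argument---given $\lambda$, pick $y\in F(\lambda)$ with $d(y,f_n(\lambda))<2^{-n}$, set $\eta:=\min(2^{-(n+1)},2^{-n}-d(y,f_n(\lambda)))/2>0$, and choose $x_k$ with $d(x_k,y)<\eta$---then shows $\bigcup_k A_k^{(n+1)}=\Lambda$. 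Disjointifying these sets and setting $f_{n+1}(\lambda):=x_k$ on the $k$-th piece delivers both required inequalities at level $n+1$.

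Each $f_n$ is $\mathscr M$-measurable because it is countably valued with measurable level sets. Since the pointwise limit of measurable functions into a metric space remains measurable, $f$ is $\mathscr M/\cB(\cO)$-measurable, and $f(\lambda)\in F(\lambda)$ for $\mu$-a.e.\ $\lambda$ because $F(\lambda)$ is closed and $d(f_n(\lambda),F(\lambda))\to 0$.

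The main obstacle is the inductive step, where the two constraints on $f_{n+1}$ must be simultaneously realizable at every $\lambda$ by some common $x_k$; this is exactly where the weak-measurability hypothesis (ii) is indispensable, since it converts the constraint ``$F(\lambda)\cap B(x_k,2^{-(n+1)})\neq\emptyset$'' into an $\mathscr M$-measurable condition on $\lambda$. Hypothesis (i) plays two distinct roles: nonemptyness is needed throughout so that the covering $\bigcup_k A_k^{(n+1)}=\Lambda$ holds, and closedness is invoked only at the very end to trap the Cauchy limit inside $F(\lambda)$.
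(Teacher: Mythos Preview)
Your argument is correct and is precisely the classical Kuratowski--Ryll-Nardzewski construction: the inductive covering argument you give is sound, the measurability of each $A_k^{(n+1)}$ is justified exactly where it should be (hypothesis (ii) for the second factor, measurability of $f_n$ for the first), and the Cauchy estimate $d(f_{n+1},f_n)<2^{-n}$ together with closedness of $F(\lambda)$ traps the limit in $F(\lambda)$ as claimed.

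There is, however, nothing in the paper to compare your proof against. The paper does not prove Theorem~\ref{thm-MS}; it simply records the statement and cites it from Wagner's survey \cite{wagner1977survey}, using it as a black box in the proofs of Theorems~\ref{thm-existence} and~\ref{thm-max}. So you have supplied a self-contained proof where the paper merely quotes a reference.
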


In \textbf{Step 1} of the proof of Theorem \ref{thm-existence}, we take $\Lambda=\{(\omega,t): \omega\in \Omega_{\tau}   \text{ and }  \tau(\omega)\leq t<T\}$, $\mathscr M=\sP$, $\mu=\bP\otimes dt$ and $(\cO,\cB(\cO))=(U,\cB(U))$. Then by the continuity of the involved functions, 
\begin{align*}
F(\omega,s):=
\bigg\{v\in U:\,&
- \mathscr L^{v}\phi(s,\xi)  -f(s,\xi,v)   \geq   \\
&\esssup_{\tilde{v}\in U} \left(- \mathscr L^{\tilde v}\phi(s,\xi)  -f(s,\xi,\tilde v) \right)  -    \eps
\bigg\}
\end{align*}
satisfies the hypothesis in Theorem \ref{thm-MS} and $\bar\theta$ can be constructed in an obvious way.

In the proof of Theorem \ref{thm-max}, for each $s\in [t,T]$, take $(\Lambda,\mathscr M,\mu)=(\Omega_t,\sF_s\cap \Omega_t,\bP)$ and $(\cO,\cB(\cO))=(\bR^d,\cB(\bR^d))$.
Noticing that both $u^+$ and $J(\cdot,\cdot;\theta)$ are lying in $\cS^2(C_0(\bR^d))$, define
$$
F(\omega)=\left\{
\hat x\in\bR^d:\,
\left(u(s,\hat x)-J(s,\hat x;\theta)\right)^+
=\max_{x\in\bR^d} \left( u(s,x)-J(s,x;\theta)\right)^+
\right\}.
$$
Applying  Theorem \ref{thm-MS} directly gives the existence of $(\xi_s)_{s\in[t,T]}$.

\section{Proof of Proposition \ref{prop-value-func}}\label{appdx-proof}

\begin{rmk}\label{rmk-infn}
For each $(t,\bar \theta)\in[0,T]\times \cU$ and $\xi\in L^0(\Omega,\sF_t;\bR^d)$,  set
$$
\mathbb{J}(t,\xi;\bar\theta)=\left\{
J(t,\xi;\theta): J(t,\xi;\theta)\leq J(t,\xi;\bar\theta),\,\,\theta\in\cU
\right\}.
$$
Then $\mathbb{J}(t,\xi;\bar\theta)$ is nonempty and for any $J(t,\xi;\tilde\theta),J(t,\xi;\check\theta)\in \mathbb{J}(t,\xi;\bar\theta)$, putting 
$$
\gamma_s=\bar\theta_s1_{\{s\in[0,t)\}}+
\left(\tilde\theta_s 1_{\{J(t,\xi;\tilde\theta)\leq J(t,\xi;\check\theta)\}}
+  \check\theta_s 1_{\{J(t,\xi;\tilde\theta)> J(t,\xi;\check\theta)\}}
\right)1_{\{s\in[t,T]\}},
$$
 one has $\gamma\in\cU$ and 
$$
J(t,\xi;\tilde\theta)\wedge J(t,\xi;\check\theta)=
J(t,\xi;\gamma)\in \mathbb{J}(t,\xi;\bar\theta).
$$
Hence, by \cite[Theorem A.3]{Karatz-Shreve-1998MMF}, there exists  $\{\theta^n\}_{n\in\bN^+}\subset \cU$ such that
$J(t,\xi;\theta^n)$ converges decreasingly to $V(t,\xi)$ with probability 1.
\end{rmk}

\begin{proof}[Proof of Proposition \ref{prop-value-func}]
From Remark \ref{rmk-infn}, assertion (i) follows obviously. Again by Remark \ref{rmk-infn}, there exists $\{\theta^n\}_{n\in\bN^+}\subset \cU$ such that $J(\tilde t,X^{0,x;\bar\theta}_{\tilde t};\theta^n)$ converges decreasingly to $V(\tilde t,X^{0,x;\bar\theta}_{\tilde t})$ with probability 1. Therefore, we have
{\small
\begin{align*}
&E_{\sF_t}
V(\tilde{t},X_{\tilde{t}}^{0,x;\bar{\theta}})
+ E_{\sF_t}\int_t^{\tilde{t}}f(s,X_s^{0,x;\bar{\theta}},\bar{\theta}_s)\,ds
\\
=\,&
E_{\sF_t}
\lim_{n\rightarrow \infty} 
J(\tilde t,X^{0,x;\bar\theta}_{\tilde t};\theta^n)
+ E_{\sF_t}\int_t^{\tilde{t}}f(s,X_s^{0,x;\bar{\theta}},\bar{\theta}_s)\,ds
\\
=\,&
\lim_{n\rightarrow \infty} 
E_{\sF_t}J(\tilde t,X^{0,x;\bar\theta}_{\tilde t};\theta^n)
+ E_{\sF_t}\int_t^{\tilde{t}}f(s,X_s^{0,x;\bar{\theta}},\bar{\theta}_s)\,ds
\\
=\,&
\lim_{n\rightarrow \infty}
E_{\sF_t}
\left[
\int_{\tilde{t}}^T
f\left(s,X^{\tilde t,X_{\tilde{t}}^{0,x;\bar{\theta}};\theta^n}_s,\theta^n_s\right)\,ds+
\int_t^{\tilde{t}}f\left(s,X_s^{0,x;\bar{\theta}},\bar{\theta}_s\right)\,ds+G\left(X^{\tilde t,X_{\tilde{t}}^{0,x;\bar{\theta}};\theta^n}_T\right)
\right]\\
\geq\,&
\essinf_{\theta\in\cU}E_{\sF_t}
\left[
\int_{t}^T
f\left(s,X^{ t,X_{{t}}^{0,x;\bar{\theta}};\theta}_s,\theta_s\right)\,ds +G\left(X^{ t,X_{{t}}^{0,x;\bar{\theta}};\theta}_T\right)
\right]
\\
=\,&V(t,X^{0,x;\bar\theta}_t),\quad \text{a.s.,}
\end{align*}
}
which yields \eqref{eq-vfunc-supM} as well as assertion (ii).

 Then we have for $0\leq t\leq \tilde{t}\leq T$,
{\small
\begin{align}
&L(\tilde t -t) \nonumber  \\
&\geq E_{\sF_t}\int_t^{\tilde{t}}f(s,X_s^{0,x;\bar{\theta}},\bar{\theta}_s)\,ds
	 \nonumber \\
&\geq\,V(t,X_t^{0,x;\bar\theta})-E_{\sF_t}V(\tilde{t},X_{\tilde{t}}^{0,x;\bar{\theta}}) \nonumber \\
&=
	\essinf_{\theta\in\cU}
	E_{\sF_t}\left[
	\int_{t}^T
	f(s,X^{ t,X_{{t}}^{0,x;\bar{\theta}};\theta}_s,\theta_s)\,ds +G\left(X^{ t,X_{{t}}^{0,x;\bar{\theta}};\theta}_T\right)
	\right] \nonumber \\
&\quad-E_{\sF_t}\essinf_{\theta\in\cU}
	E_{\sF_{\tilde{t}}}\left[
	\int_{\tilde{t}}^T
	f(s,X^{\tilde t,X_{\tilde{t}}^{0,x;\bar{\theta}};\theta}_s,\theta_s)\,ds +G\left(X_T^{\tilde t,X_{\tilde{t}}^{0,x;\bar{\theta}};\theta}\right)
	\right]
	 \nonumber \\
&\geq
	\essinf_{\theta\in\cU}
	E_{\sF_t}\left[
	\int_{t}^T
	f(s,X^{ t,X_{{t}}^{0,x;\bar{\theta}};\theta}_s,\theta_s)\,ds +G\left(X^{ t,X_{{t}}^{0,x;\bar{\theta}};\theta}_T\right)\right] \nonumber \\
&\quad-\essinf_{\theta\in\cU}
	E_{\sF_t}\left[
	\int_{\tilde{t}}^T
	f(s,X^{\tilde t,X_{\tilde{t}}^{0,x;\bar{\theta}};\theta}_s,\theta_s)\,ds +G\left(X_T^{\tilde t,X_{\tilde{t}}^{0,x;\bar{\theta}};\theta}\right)
	\right]
	 \nonumber \\
&\geq
	\essinf_{\theta\in\cU}\bigg\{
	E_{\sF_t}\int_{t}^{\tilde{t}}
	f(s,X^{ t,X_{{t}}^{0,x;\bar{\theta}};\theta}_s,\theta_s)\,ds
	+E_{\sF_t}\int_{\tilde{t}}^T
	\left(
	f(s,X^{ t,X_{{t}}^{0,x;\bar{\theta}};\theta}_s,\theta_s)
	-f(s,X^{\tilde t,X_{\tilde{t}}^{0,x;\bar{\theta}};\theta}_s,\theta_s)
	\right)\,ds
	 \nonumber \\
&\quad\quad\quad
	+
	E_{\sF_t}\left[
	G\left(X^{ t,X_{{t}}^{0,x;\bar{\theta}};\theta}_T\right)
	-G\left(X_T^{\tilde t,X_{\tilde{t}}^{0,x;\bar{\theta}};\theta}\right)
	\right]
	\bigg\} \nonumber \\
&\geq
	-\esssup_{\theta\in\cU}\bigg\{
	L(\tilde t -t)
	+
	E_{\sF_t}
	\int_{\tilde t}^T L\left|   X^{ t,X_{{t}}^{0,x;\bar{\theta}};\theta}_s-  X^{\tilde t,X_{\tilde{t}}^{0,x;\bar{\theta}};\theta}_s \right| \,ds
	+E_{\sF_t} \bigg[
	L\left|   X^{ t,X_{{t}}^{0,x;\bar{\theta}};\theta}_T-  X^{\tilde t,X_{\tilde{t}}^{0,x;\bar{\theta}};\theta}_T \right|
	\bigg]
	\bigg\} \nonumber \\
&\geq
	-L(\tilde t -t)-\esssup_{\theta\in\cU}\bigg\{
	KTLE_{\sF_t}
	 \left|   X^{ t,X_{{t}}^{0,x;\bar{\theta}};\theta}_{\tilde t}-  X_{\tilde{t}}^{0,x;\bar{\theta}} \right|
	+LKE_{\sF_t} 
	\left|   X^{ t,X_{{t}}^{0,x;\bar{\theta}};\theta}_{\tilde t}-  X_{\tilde{t}}^{0,x;\bar{\theta}} \right|
	\bigg\} \nonumber \\
&=
	-L(\tilde t -t)-\esssup_{\theta\in\cU}\bigg\{
	KL(T+1)E_{\sF_t}
	 \left|   X^{ t,X_{{t}}^{0,x;\bar{\theta}};\theta}_{\tilde t}-  X_{\tilde{t}}^{0,x;\bar{\theta}} \right|
	\bigg\} \nonumber \\
&\geq
	-L(\tilde t -t)-(T+1)KL\esssup_{\theta\in\cU}\bigg\{
	E_{\sF_t}\left[
	 \left|   X^{ t,X_{{t}}^{0,x;\bar{\theta}};\theta}_{\tilde t}      - X^{ t,X_{{t}}^{0,x;\bar{\theta}};\theta}_{ t}\right|+     \left| X_{\tilde{t}}^{0,x;\bar{\theta}}-X_{{t}}^{0,x;\bar{\theta}} \right|\right] 
	\bigg\} \nonumber \\
&\geq
	-L(\tilde t -t)-    2(T+1)K^2L \left(1+\left|X_{{t}}^{0,x;\bar{\theta}} \right|\right)  (\tilde{t}-t)^{1/2}\longrightarrow 0,\quad \text{as }|t-\tilde{t}|\rightarrow 0, \label{est-prop-cont}
\end{align}
}
where we used the basic properties listed in Lemma \ref{lem-SDE}. Analogously, one proves the continuity of $EV(t,X^{0,x;\bar{\theta}}_t)$ in $t$, which by the regularity of supermartingale implies the right continuity of $V(t,X^{0,x;\bar{\theta}}_t)$. Furthermore, by the BSDE theory, $E_{\sF_t}V(\tilde{t},X^{0,x;\bar{\theta}}_{\tilde{t}})$ is continuous in $t\in[0,\tilde{t}]$, and this together with the above calculations implies the left continuity of $V(t,X^{0,x;\bar{\theta}}_t)$ in $t$. Hence, $\left\{V(s,X_s^{0,x;\bar{\theta}})\right\}_{s\in[0,T]}$ is a continuous process and we prove assertion (iii).

 For any $x,y\in\bR^d$ and any $\bar\theta\in\cU$, by definition of the value function, we have
\begin{align}
&|V(t,x)-V(t,y)|+|J(t,x;\bar\theta)-J(t,y;\bar\theta)|
\\
&
\leq 2 \esssup_{\theta\in\cU}E_{\sF_t}\bigg[\int_t^T\!\!|f(s,X_s^{t,x;\theta},\theta_s)-f(s,X_s^{t,y;\theta},\theta_s)|\,ds
+|G(X_T^{t,x;\theta})-G(X_T^{t,y;\theta})|\bigg]
\nonumber\\
&\leq
2 \esssup_{\theta\in\cU}\left\{ 
\int_t^T\!\! L E_{\sF_t} |X_s^{t,x;\theta}- X_s^{t,y;\theta} |\,ds
+LE_{\sF_t} |X_T^{t,x;\theta}-X_T^{t,y;\theta}|
\right\}
\nonumber\\
&\leq
2L(T-t)K|x-y|+2LK|x-y| \label{est-hder}
\end{align}
from which one derives the Lipschitz continuity of $V(t,x)$ and $J(t,x;\theta)$ in $x$. This yields assertion (iv).

Finally,  for any $(s,y),(t,x)\in[0,T]\times \bR^d$, we assume w.l.o.g. $s\geq t$. 

On the one hand, if $(s,y)$ tends to $(t,x)$, by assertions (iii) and (iv), we have
\begin{align*}
|V(t,x)-V(s,y)|
&\leq \left|V(t,x)-V\left(s,X_s^{t,x;\theta}\right)\right|
      + \left| V\left(s,X_s^{t,x;\theta}\right) -V(s,x)\right|
      +\left|V(s,x)-V(s,y)\right|\\
 &\leq
      \left|V(t,x)-V\left(s,X_s^{t,x;\theta}\right)\right|
      +L_V\left( \left| X_s^{t,x;\theta} -x \right|
      +|x-y|\right) \\
 &\rightarrow 0 \quad \text{a.s.}
\end{align*}
On the other hand, if $(t,x)$ tends to $(s,y)$, it holds that
\begin{align*}
|V(t,x)-V(s,y)|
&\leq
	\left| V(s,y)-E_{\sF_t}V(s,y)    \right|
	+\left| E_{\sF_t}[V(s,y)-V(s,x)]   \right|
	+\left| E_{\sF_t}[V(s,x)-V(s,X^{t,x}_s)]   \right|
\\
&\quad
	+\left| E_{\sF_t}V(s,X^{t,x}_s)  -V(t,x)\right|\\
&\leq
	\left| V(s,y)-E_{\sF_t}V(s,y)    \right|
	+C|x-y|
	+CK(1+|x|)(s-t)^{1/2}
\\
&\quad
	+C\left[s-t+(1+|x|)(s-t)^{1/2}  \right] \quad \text{(by estimate \ref{est-prop-cont})}\\
&\rightarrow 0 \quad \text{a.s.}
\end{align*}
Hence, $V(t,x)$ is continuous on $[0,T]\times\bR^d$ for almost all $\omega\in\Omega$, and by $(\cA1)$, it is obvious that $\esssup_{\omega\in\Omega}\sup_{(t,x)\in[0,T]\times\bR^d} |V(t,x)|\leq LT+L$. For $J(t,x)$, it follows similarly.  
\end{proof}

\end{appendix}

\bibliographystyle{siam}

\end{document}